\documentclass[reqno,12pt]{amsart}
\usepackage{amsmath,amsthm,amsfonts,amssymb}
\usepackage{euscript}
\usepackage[utf8]{inputenc}
\usepackage[english]{babel}
\usepackage{graphicx}
\usepackage{pgf,tikz}
\usepackage[colorlinks, citecolor=blue]{hyperref}
\usetikzlibrary{arrows}
\date{}
\usepackage[letterpaper,margin=0.9in,top=1.25in,bottom=1.15in]{geometry}
\usepackage{ulem}
\usepackage{placeins}
\usepackage{mathrsfs}
\usepackage[]{todonotes}

\def\Ric{{\mathrm {Ric}}}

\setlength{\oddsidemargin}{0.in}
\setlength{\evensidemargin}{0.in}
\setlength{\textwidth}{6.46in}
\setlength{\textheight}{8.8in}

\newtheorem{theorem}{Theorem}[section]
\newtheorem{lemma}[theorem]{Lemma}
\newtheorem{corollary}[theorem]{Corollary}
\newtheorem{proposition}[theorem]{Proposition}

\theoremstyle{definition}
\newtheorem{definition}[theorem]{Definition}

\theoremstyle{remark}
\newtheorem{remark}[theorem]{Remark}
\numberwithin{equation}{section}

\begin{document}
\title[Counting ends on shrinkers]
{Counting ends on shrinkers}

\author{Jia-Yong Wu}
\address{Department of Mathematics, Shanghai University, Shanghai 200444, China}
\email{wujiayong@shu.edu.cn}

\thanks{}

\subjclass[2010]{Primary 53C21; Secondary 53C20.}

\dedicatory{}

\date{\today}

\keywords{Gradient shrinking Ricci soliton, end, volume comparison,
asymptotic scalar curvature ratio, asymptotic volume ratio.}
\begin{abstract}
In this paper we apply a geometric covering method to study the number
of ends on shrinkers. On one hand, we prove that the number of ends on
any complete non-compact shrinker is at most polynomial growth with
fixed degree. On the other hand, we prove that any complete non-compact
shrinker with certain volume comparison condition has finitely many ends.
Some special cases of shrinkers are also discussed.
\end{abstract}
\maketitle


\section{Introduction and main results}

An $n$-dimensional Riemannian manifold $(M,g)$ is called a \textit{gradient shrinking Ricci
soliton or shrinker} (see \cite{[Ham]}) if there exists a smooth function $f$ on $(M,g)$
such that the Ricci curvature $\text{Ric}$ and the Hessian of $f$ satisfy
\[
\Ric+\mathrm{Hess}\,f=\lambda g
\]
for some constant $\lambda>0$. Function $f$ is often called a \textit{potential}
of the shrinker.  Upon scaling the metric $g$ by a constant, we may assume
$\lambda=1/2$ so that
\begin{align}\label{Eq1}
\Ric +\mathrm{Hess}\,f=\frac 12g.
\end{align}
Furthermore, we can normalize $f$ such that
\eqref{Eq1} simultaneously satisfies
\begin{equation}\label{condition}
\mathrm{S}+|\nabla f|^2-f=0,
\end{equation}
where $\mathrm{S}$ is the scalar curvature of $(M,g)$, and
\begin{equation}\label{condmu}
\int_M (4\pi)^{-\frac n2}e^{-f} dv=e^{\mu},
\end{equation}
where $dv$ is the volume element with respect to metric $g$, and
$\mu=\mu(g,1)$ is the entropy functional of Perelman \cite{[Pe]}. By
Lemma 2.5 in \cite{[LLW]}, we see that the term $e^{\mu}$ is almost equivalent
to the volume of geodesic ball $B(p,1)$ with radius $1$ and center $p$. Here
$p\in M$ is a infimum point of $f$, which can be always achieved for any
complete shrinker; see \cite{[HaMu]}.

Shrinkers play an important role in the Ricci flow as they correspond to some
self-similar solutions and usually arise as the limit solutions of type I
singularity models of the Ricci flow \cite{[EMT]}. They are regarded as a
natural extension of Einstein manifolds with positive scalar curvature,
and are related to the Bakry-\'Emery Ricci tensor \cite{[BE]}. Nowadays,
the understanding of geometry and topology for shrinkers is an important
subject in the Ricci flow \cite{[Ham]}. For dimensions 2 and 3, the
classification of shrinkers is complete. However dimensions equal to or
greater than 4, the complete classification remains open; see
\cite{[Cao1],[Cao2]} and references therein for nice surveys.

It is an interesting phenomenon that many geometric and analytic properties of
shrinkers are similar to manifolds with nonnegative Ricci curvature or Einstein
manifolds with positive scalar curvature. Some interesting results are exhibited
as follows. Wylie \cite{[Wy]} proved that any complete shrinker has finite
fundamental group (the compact case due to Derdzi\'nski \cite{[De]}). Fang, Man
and Zhang \cite{[FMZ]} showed that any non-compact shrinker with bounded scalar
curvature has finite topological type. Chen and Zhou \cite{[CaZh]} confirmed
that any non-compact shrinker has at most Euclidean volume growth. Munteanu and
Wang \cite{[MuWa12]} proved that any non-compact shrinker has at least linear
volume growth.

Haslhofer and M\"uller \cite{[HaMu],[HaMu2]} proved a Cheeger-Gromov compactness
theorem of shrinkers with a lower bound on their entropy and a local integral
Riemann bound. Li, Li and Wang \cite{[LLW]} gave a structure theory for
non-collapsed shrinkers, which was further developed by Huang, Li and Wang
\cite{[HLW]}. For the $4$-dimensional case, Li and Wang \cite{[LiWa19]} proved
that any nontrivial flat cone cannot be approximated by smooth shrinkers with
bounded scalar curvature and Harnack inequality under the pointed-Gromov-Hausdorff
topology. Huang \cite{[Hua]} applied the strategy of Cheeger-Tian \cite{[CT]}
in Einstein manifolds and proved an $\epsilon$-regularity theorem for
$4$-dimensional shrinkers, confirming a conjecture of Cheeger-Tian \cite{[CT]}.

Recently, Li and Wang \cite{[LiWa]} obtained a sharp logarithmic Sobolev inequality,
the Sobolev inequality, heat kernel estimates, the no-local-collapsing theorem,
the pseudo-locality theorem, etc. on complete shrinkers, which can be further
extended to the other geometric inequalities, such as Nash inequalities, Faber-Krahn
inequalities and Rozenblum-Cwikel-Lieb inequalities in \cite{[Wu]}. For more function
theory on shrinkers, the interested readers are referred to
\cite{[GZ],[MSW],[MuW14],[MuWa14e], [Wu15], [WW15],[WW16]} and references therein.

On a manifold $M$, a set $E$ is called an \textit{end} with respect to a compact set
$\Omega\subset M$, if it is an unbounded connected component of $M\backslash\Omega$.
The number of ends with respect to $\Omega$, denoted by $N_\Omega(M)$, is the number
of unbounded connected components of $M\backslash\Omega$. If $\Omega_1\subset\Omega_2$,
then $N_{\Omega_1}(M)\le N_{\Omega_2}(M)$. Hence if $\Omega_i$ is a compact exhaustion
of $M$, then $N_{\Omega_i}(M)$ is a nondecreasing sequence. If this sequence is bounded,
then we say that $M$ has finitely many ends. In this case, the number of ends of $M$ is
defined by
\[
N(M)=\lim_{i\to\infty}N_{\Omega_i}(M).
\]
Obviously, the number of ends is independent of the compact exhaustion $\{\Omega_i\}$.
Ends of manifolds are related to the geometry and topology of manifolds;
the interested reader may refer to the book \cite{[PL]}.

The Cheeger-Gromoll's splitting theorem \cite{[CG]} indicates that any
complete non-compact manifold with nonnegative Ricci curvature has at most two ends.
Later, Cai \cite{[Cai]} and Li-Tam \cite{[LT]} independently proved that any manifold
with nonnegative Ricci curvature outside a compact set has at most finitely many ends
(see also Liu \cite{[Liu2]}); see \cite{[Wu16]} for an extension to smooth metric
measure spaces. Cai's approach is pure geometrical, strongly depending on a local
version of Cheeger-Gromoll's splitting theorem, while Li-Tam's proof is analytic in
nature by taking full advantage of the harmonic function theory. Liu's proof is also
geometrical, not adapting the local splitting theorem but using various volume
comparisons. At present, an interesting question of whether the Cheeger-Gromoll
splitting theorem holds on any complete non-compact shrinker still remains unresolved.
In the next attempt to consider the number of ends, it is natural to ask

\vspace{.1in}

\noindent \textbf{Question}. \textit{Does any a complete non-compact shrinker
have finitely many ends?}

\vspace{.1in}

For the K\"ahler case, Munteanu and Wang \cite{[MuWa14e]} proved that any
K\"ahler shrinker has only one end. For the Riemannian case, Munteanu, Schulze
and Wang \cite{[MSW]} showed that the number of ends is finite when the
scalar curvature satisfies certain scalar curvature integral at infinity.
Their proof depends on the Li-Tam's analytic theory \cite{[LT]}.
In this paper, we use a geometric covering argument and prove that
\begin{theorem}\label{endest}
The number of ends on $n$-dimensional complete non-compact shrinker with
the scalar curvature
\[
\mathrm{S}\ge \delta
\]
for some constant $\delta\ge 0$ is at most polynomial growth with degree $2(n-\delta)$.
\end{theorem}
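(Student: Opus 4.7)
The plan is to count ends by a packing argument that couples an upper bound on $V(B(p,R))$ with a lower bound on $V(B(x,\rho))$ at a small scale $\rho = \rho(R)$. Let $p$ be an infimum point of $f$ and let $N(R)$ denote the number of ends of $M$ with respect to $B(p,R)$; since $N_\Omega(M)$ is nondecreasing in $\Omega$, it suffices to bound $N(R)$ for $R$ large. For each unbounded component $E_i$ of $M \setminus \overline{B(p,R)}$, choose a representative $x_i \in E_i$ with $d(x_i,p) = R+1$. Because distinct $E_i$ and $E_j$ lie in different connected components of $M \setminus \overline{B(p,R)}$, any continuous path in $M$ joining $x_i$ to $x_j$ must enter $B(p,R)$; consequently
\[
d(x_i, x_j) \;\geq\; d(x_i, \partial B(p,R)) + d(x_j, \partial B(p,R)) \;\geq\; 2,
\]
and the balls $\{B(x_i, 1)\}_i$ are pairwise disjoint and contained in $B(p, R+2)$.

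Next I would establish a quantitative non-collapsing at the scale $c_0/R$ for a small $c_0 = c_0(n)$. The standard quadratic distance estimate for the potential $f \leq \tfrac{1}{4}(d(\cdot,p)+c)^2$, combined with the normalization \eqref{condition}, yields $\mathrm{S} \leq f \leq CR^2$ on $B(p, 2R) \supset B(x_i, 1)$, so that $\mathrm{S} \leq (c_0/R)^{-2}$ on each $B(x_i, c_0/R)$ once $c_0$ is small. The no-local-collapsing theorem for shrinkers of Li and Wang \cite{[LiWa]} then gives
\[
V(B(x_i, c_0/R)) \;\geq\; \kappa\,(c_0/R)^n \;=:\; \kappa' R^{-n},
\]
with $\kappa, \kappa'$ depending only on $n$ and the entropy $\mu$.

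The decisive ingredient is an upper volume bound that actually uses $\mathrm{S} \geq \delta$. Sharpening the Chen--Zhou Euclidean estimate $V(B(p,R)) \leq CR^n$ \cite{[CaZh]}, my aim is
\[
V(B(p,R)) \;\leq\; C R^{\,n-2\delta},
\]
obtained by running a weighted volume comparison on $(M,g,f)$ in the spirit of Wei--Wylie and Munteanu--Wang, using $f-\delta = (\mathrm{S}-\delta) + |\nabla f|^2 \geq |\nabla f|^2$ to gain a factor $R^{-2\delta}$ in a Laplacian comparison for the $f$-distance. Combined with the disjointness from Step~1, this produces
\[
N(R)\cdot \kappa' R^{-n} \;\leq\; \sum_i V(B(x_i, c_0/R)) \;\leq\; V(B(p, R+2)) \;\leq\; C R^{n-2\delta},
\]
whence $N(R) \leq C R^{2(n-\delta)}$. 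The main obstacle is this sharpened volume growth: the end-separation and the non-collapsing steps are essentially mechanical consequences of known shrinker estimates, but converting the pointwise bound $\mathrm{S} \geq \delta$ into the explicit exponent $n - 2\delta$ in the volume growth is where the hypothesis is actually consumed, and a careful weighted comparison argument will be required; a secondary technical point is verifying that the no-local-collapsing in \cite{[LiWa]} applies uniformly at the small scale $c_0/R$ across all representatives $x_i$.
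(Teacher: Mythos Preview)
Your proposal is correct and follows essentially the same strategy as the paper. Both arguments combine the sharpened upper volume estimate $\mathrm{Vol}(B(p,R)) \le C R^{\,n-2\delta}$ (this is Lemma~\ref{logeq1}, due to Zhang, proved via the Cao--Zhou integral identity rather than a Wei--Wylie comparison as you suggest) with a lower bound $\mathrm{Vol}(B(x,\rho)) \gtrsim e^{\mu}\rho^n(1+\Lambda\rho^2)^{-n/2}$ from the Li--Wang Sobolev inequality, using the a priori bound $\mathrm{S}\lesssim R^2$ on $B(p,2R)$; the paper packages this as a ball-covering bound for the annulus (Theorem~\ref{sdest}) and argues by contradiction, whereas you pack one small ball per end directly, but the volume inputs and the resulting exponent $2(n-\delta)$ are identical. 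One cosmetic difference: you work at scale $c_0/R$ to make $\mathrm{S}\le\rho^{-2}$ literal, while the paper applies the Sobolev lower bound at scale $r/8$ and absorbs the factor $(1+\Lambda r^2)^{n/2}\sim r^n$; both routes yield the same $R^{-n}$ lower bound.
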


\begin{remark}
From \eqref{en1} in Section \ref{volcom}, we will see that $\mathrm{S}\ge\delta$
implies $\delta\le n/2$ on shrinkers. From Remark \ref{levcon}, we have that
the point-wise assumption $\mathrm{S}\ge \delta$ can be replaced by a lower 
of the average scalar curvature over the level set 
$\{f<r\}:=\left\{x\in M|f(x)<r\right\}$ for any $r>0$, that is,
\[
\frac{1}{\int_{\{f<r\}}dv}\int_{\{f<r\}}\mathrm{S}\,dv\ge\delta
\]
for any $r>0$. If the scalar curvature also has a uniformly upper
bound, then the degree $2(n-\delta)$ in theorem can be reduced to $n-2\delta$;
see Remark \ref{reN2}.
\end{remark}

The following condition introduced in \cite{[LT2]} will play an important role
in this paper.
\begin{definition}
A Riemannian manifold $(M,g)$ has \textit{volume comparison condition} if there exists
a constant $\eta>0$ such that for all $r\ge r_0$ for some $r_0>0$,
and all $x\in\partial B(q,r)$,
\[
\mathrm{Vol}(B(q,r))\le\eta\,\mathrm{Vol}\left(B(x,\frac{r}{16})\right),
\]
where $\mathrm{Vol}(B(q,r))$ is the volume of geodesic ball $B(q,r)$ of radius $r$
with center at a fixed point $q\in M$.
\end{definition}

If the shrinker satisfies volume comparison condition, we prove that
\begin{theorem}\label{main1}
Any complete non-compact shrinker with volume comparison condition must have finitely many ends.
\end{theorem}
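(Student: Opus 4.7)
The plan is to mirror the geometric covering argument behind Theorem~\ref{endest}, replacing the scalar curvature lower bound by the volume comparison condition. The intuitive idea is that the condition forces a non-collapsed small ball at every point on each large geodesic sphere; if $M$ had many ends, one could pack many disjoint such balls into a fixed annulus around $q$, and a volume count would then cap the number of ends.

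First I fix $q$ to be the base point of the volume comparison condition and choose a compact exhaustion $\{\Omega_k\}$ of $M$ with $\Omega_k\subset B(q,r_k)$ and $r_k\to\infty$. Write $N_k:=N_{\Omega_k}(M)$; the aim is to bound $N_k$ uniformly in $k$. For $k$ large pick $R_k>16r_k/15$ with $R_k\geq r_0$, and for each unbounded component $E_i^{(k)}$ of $M\setminus\Omega_k$ select $x_i^{(k)}\in E_i^{(k)}\cap\partial B(q,R_k)$, which exists by connectedness and unboundedness of $E_i^{(k)}$ together with continuity of $d(q,\cdot)$. Any path from $x_i^{(k)}$ to $x_j^{(k)}$ with $i\neq j$ must traverse $\Omega_k\subset B(q,r_k)$, so
\[
d(x_i^{(k)},x_j^{(k)})\geq 2(R_k-r_k)>R_k/8,
\]
making the open balls $B(x_i^{(k)},R_k/16)$ pairwise disjoint and, since each of their points lies at distance between $15R_k/16$ and $17R_k/16$ from $q$, contained in $B(q,17R_k/16)$. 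Applying the volume comparison condition at radius $R_k$ at each $x_i^{(k)}$ gives $\mathrm{Vol}(B(x_i^{(k)},R_k/16))\geq \mathrm{Vol}(B(q,R_k))/\eta$, and summing yields
\[
N_k\leq \eta\cdot\frac{\mathrm{Vol}(B(q,17R_k/16))}{\mathrm{Vol}(B(q,R_k))}.
\]

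The hard part is to show the right-hand side stays bounded as $k\to\infty$. Cao--Zhou's Euclidean upper volume bound supplies $\mathrm{Vol}(B(q,R))\leq CR^n$, so what is needed is a matching polynomial lower bound on $\mathrm{Vol}(B(q,R))$. Such a bound can be extracted from the volume comparison condition itself: since $B(x,R/16)$ lies in the annulus $B(q,17R/16)\setminus B(q,15R/16)$, one obtains
\[
\mathrm{Vol}(B(q,17R/16))\geq \mathrm{Vol}(B(q,15R/16))+\mathrm{Vol}(B(q,R))/\eta\geq (1+\eta^{-1})\mathrm{Vol}(B(q,R)),
\]
and iterating along the geometric progression $R,(17/16)R,(17/16)^2R,\dots$ yields $\mathrm{Vol}(B(q,R))\geq cR^{\beta}$ with $\beta=\log(1+\eta^{-1})/\log(17/16)$. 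Combined with Cao--Zhou, this controls the ratio by $C'R^{n-\beta}$; closing the residual gap (when $\beta<n$) will require either a finer boundary packing that inserts several disjoint non-collapsed balls per end at scale $R_k/16$ in place of one, or invoking the asymptotic polynomial volume structure of shrinkers. Once the ratio is uniformly bounded by some $L$, we conclude $N_k\leq \eta L$ for every $k$, and hence $N(M)<\infty$.
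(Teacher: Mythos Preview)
Your packing argument up to the inequality
\[
N_k\le \eta\cdot\frac{\mathrm{Vol}(B(q,17R_k/16))}{\mathrm{Vol}(B(q,R_k))}
\]
is clean and is essentially the dual of the paper's covering argument (the paper packs disjoint $r/8$-balls in an annulus and bounds their number; you place one ball per end directly). But the proof breaks down exactly where you acknowledge it does: you never bound the volume ratio on the right. Your attempt via the Cao--Zhou upper bound $\mathrm{Vol}(B(q,R))\le CR^n$ combined with a lower bound $\mathrm{Vol}(B(q,R))\ge cR^\beta$ extracted from the volume comparison condition cannot close the argument, because $\beta$ depends only on $\eta$ and there is no reason for it to equal $n$; for large $\eta$ it is much smaller, and the resulting bound $N_k\le C'R_k^{\,n-\beta}$ is useless. (Incidentally, your iteration step $\mathrm{Vol}(B(q,17R/16))\ge(1+\eta^{-1})\mathrm{Vol}(B(q,R))$ is not quite right either: the annulus estimate only gives $\mathrm{Vol}(B(q,17R/16))\ge(1+\eta^{-1})\mathrm{Vol}(B(q,15R/16))$, since $15R/16<R$; but this is a side issue.) Neither of your proposed fixes helps: inserting a bounded number of extra balls per end only divides the right-hand side by a fixed constant, and ``asymptotic polynomial volume structure of shrinkers'' is too vague to be a proof.

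The missing ingredient is precisely the paper's Theorem~\ref{relcompar}: on any shrinker, the \emph{ratio} $\mathrm{Vol}(B(q,R))/\mathrm{Vol}(B(q,\alpha R))$ is bounded by a constant depending only on $n$ and $\alpha$, for all $R$ large enough (the threshold depending on $f(q)$). Applying this with $\alpha=16/17$ immediately gives a uniform bound on $\mathrm{Vol}(B(q,17R_k/16))/\mathrm{Vol}(B(q,R_k))$ and finishes your argument in one line. This is exactly how the paper proceeds: its Theorem~\ref{shrendest} combines the volume comparison condition with Theorem~\ref{relcompar} to get a covering number independent of $r$, and then deduces finitely many ends by the same contradiction you sketch. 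The point is that the needed control is a \emph{relative} volume comparison at the single base point $q$, not separate upper and lower growth estimates.
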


Many special cases of shrinkers satisfy volume comparison condition. The detailed
discussion can be referred to Section \ref{sec4}. Here we summarize
some results as follows:

(I) If a manifold satisfies volume doubling property, then it admits volume
comparison condition; see Proposition \ref{voldoub}. Recall that $(M,g)$
is said to be \textit{volume doubling property} if
\[
\mathrm{Vol}(B(x,2r))\le D\,\mathrm{Vol}(B(x,r))
\]
for any $x\in M$ and $r>0$, where $D$ is a fixed constant. Clearly, any
manifold with nonnegative Ricci curvature satisfies volume doubling property.

(II) If the asymptotic scalar curvature ratio of shrinker is finite, then such
shrinker has volume comparison condition; see Proposition \ref{decc}. Given a
point $q\in (M,g)$, the \textit{asymptotic scalar curvature ratio}
($\operatorname{ASCR}$) is defined by
\[
\operatorname{ASCR}(g):=\underset{r(q,x)\to\infty}{\lim\sup}\,\mathrm{S}(x)\cdot r(q,x)^2,
\]
$r(q,x)$ is the distance function from $q$ to $x$. It is easy to see that
$\operatorname{ASCR}(g)$ is independent of the base point $q$. Chow,
Lu and Yang \cite{[ChLY]} proved that a non-compact non-flat shrinker has
at most quadratic scalar curvature decay. Therefore, except the flat
shrinker, our assumption is in fact equivalent to $\operatorname{ASCR}(g)=c_0$
for some constant $c_0>0$, which takes place at least for the asymptotically
conical shrinker \cite{[KW]}.

(III) If a family of average of scalar curvature integral has at least
quadratic decay of radius, precisely, for a infimum point $p\in M$ of $f$,
there exists a constant $c_1>0$ such that
\[
\frac{r^2}{\mathrm{Vol}\left(B(x,r)\right)}\int_{B(x,r)}\mathrm{S}\,dv\le c_1
\]
for all $r>0$ and all $x\in\partial B(p,r)$, then such shrinker has volume
comparison condition; see Proposition \ref{intevc}. The class of average scalar
curvature integral can be regarded as some energy functions of scalar curvature,
which is derived from Li-Wang (logarithmic) Sobolev inequalities; see Lemma
\ref{logeq2} or Lemma \ref{slogeq}.

(IV) If a complete non-compact shrinker $(M,g,f)$ with a infimum point $p\in M$ of $f$
satisfies
\[
\mathrm{Vol}\left(B(x,\frac{r}{16})\right)\ge c_2\,r^n
\]
for all $r>0$ and all $x\in\partial B(p,r)$, where $c_2$ is a positive
constant, then such shrinker satisfies volume comparison condition; see
Corollary \ref{AVRc}. This condition can be regarded as a family of
Euclidean volume growth, which seems to be stronger than the positive
asymptotic volume ratio; see the end of Section \ref{sec4} for the
detailed discussion.

Besides, Li and Tam \cite{[LT2]} proved that if a Riemannian manifold with
each end has asymptotically non-negative sectional curvature, then it satisfies
the volume comparison condition. Recall that $(M,g)$ has \textit{asymptotically
non-negative sectional curvature} if there exists a point $q\in M$ and a
continuous decreasing function $\tau:\mathbb{R}^{+}\to\mathbb{R}^{+}$ such that
$\int^{+\infty}_0t\tau(t)\,dt<\infty$
and the sectional curvature $K(x)$ at any point $x\in M$ satisfies
$K(x)\ge-\tau(r(q,x))$,
where $r(q,x)$ is a distance function from $q$ to $x$. Li and Tam \cite{[LT2]}
also proved that if a Riemannian manifold with finite first Betti number has
nonnegative Ricci curvature outside a compact set, then it satisfies volume
comparison condition. We refer the readers to \cite{[LT2]} for further related
discussions.

Different from Munteanu-Schulze-Wang's analytic argument, our proof of Theorem
\ref{endest} is geometrical, which stems from Liu's approach \cite{[Liu2]},
but we have a major obstacle due to the lack of volume comparison at different
points and radii. For manifolds with nonnegative Ricci curvature (outside a
compact set), such properties come from classical relative volume comparisons.
With these comparisons, Liu was able to get a ball covering property of
manifolds with nonnegative Ricci curvature (outside a compact set) and hence
proved finitely many ends. But for shrinkers, we only prove relative
volume comparisons about geodesic balls with center at a base point; see
Theorem \ref{relcompar} in Section \ref{volcom}. We do not know if they
could hold for geodesic balls centered at different points. To overcome
this difficulty, we extend Cao-Zhou upper volume bound \cite{[CaZh]}
(further development by Munteanu-Wang \cite{[MuWa12]}, Zhang \cite{[Zh]})
to a more precise statement; see Lemma \ref{logeq1}; while we generalize
the Li-Wang lower volume bound \cite{[LiWa]}; see Lemmas \ref{logeq2}
and \ref{slogeq}. Applying these upper and lower volume estimates, we
could get a weak volume comparison condition; see Proposition \ref{vd}
in Section \ref{sec3}. This proposition is enough to produce a weak ball
covering property (see Theorem \ref{sdest} in Section \ref{sec3}) and
finally leads to Theorem \ref{endest}. In particular, when the shrinker
satisfies volume comparison condition, we can prove Theorem \ref{main1}
in a similar spirit.

The rest of paper is organized as follows. In Section \ref{volcom}, we
will prove upper and lower relative volume comparisons of the shrinker
in geodesic balls with center at a base point. We also give some upper
and lower volume estimates. In Section \ref{sec3}, we will use volume
comparisons of Section \ref{volcom} to prove a weak ball covering
property. Then we apply the weak ball covering property to prove Theorem
\ref{endest}. In Section \ref{sec4}, when the shrinker satisfies volume
comparison condition, we will prove Theorem \ref{main1} by adapting
the argument of Theorem \ref{endest}. Meanwhile, we will provide various
sufficient condition to ensure volume comparison condition. In Section
\ref{sec5}, we will apply the ball covering property of shrinkers to
study the diameter growth of ends.

In the whole of this paper, we let $c(n)$ denote a constant depending only on dimension
$n$ of shrinker $(M,g,f)$ whose value may change from line to line.

\vspace{.1in}

\textbf{Acknowledgements}.
The author thanks Yu Li for his valuable suggestions and stimulating
discussions, which improves some results in this paper. The author
also thanks Guoqiang Wu for his helpful comments on an earlier version
of this paper. Finally the author sincerely thanks Professor Ovidiu
Munteanu for valuable comments and pointing out a mistake of an earlier
version of the paper.

%
\section{Volume comparison}\label{volcom}
In this section, we will discuss upper and lower relative volume comparisons
of shrinker about geodesic balls with center at a base point. We will
also discuss upper and lower volume estimates of shrinkers.

Recall that the potential $f$ of shrinker is uniformly equivalent to the distance function
squared. Precisely, the following sharp estimate was established  originally
due to Cao-Zhou \cite{[CaZh]} and later improved by Haslhofer-M\"uller \cite{[HaMu]};
see also Chow et al. \cite{[Chowetc]}.
\begin{lemma}\label{potenesti}
Let $(M,g, f)$ be an $n$-dimensional complete non-compact shrinker satisfying
\eqref{Eq1} and \eqref{condition}. For any point $q\in M$, $f$ satisfies
\[
\frac 14\left[\left(r(q,x)-2\sqrt{f(q)}-4n+\frac 43\right)_{+}\right]^2\le f(x)\le\frac 14\left(r(q,x)+2\sqrt{f(q)}\right)^2
\]
for all $x\in M$, where $r(q,x)$ denotes a distance function from $q$
to $x$.

Moreover, there exists a point $p\in M $ where $f$ attains its infimum in $M$
such that $f(p)\le n/2$; meanwhile $f$ has a simple estimate
\[
\frac 14\left[\big(r(p,x)-5n\big)_{+}\right]^2\le f(x)\le\frac 14\left(r(p,x)+\sqrt{2n}\right)^2
\]
for all $x\in M$. Here $a_+=\max\{a,0\}$ for $a\in \mathbb{R}$.
\end{lemma}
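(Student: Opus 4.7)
The plan is to follow the Cao--Zhou / Haslhofer--M\"uller approach, splitting the statement into three pieces: the upper bound on $f$, the lower bound on $f$, and the existence of a global minimum $p$ with $f(p)\le n/2$. The upper bound is a one-line gradient estimate: B.-L.\ Chen's theorem gives $\mathrm{S}\ge 0$ on any complete shrinker, so \eqref{condition} forces $|\nabla f|^2\le f$ and hence $f\ge 0$ on $M$ with $|\nabla\sqrt{f}|\le 1/2$ on $\{f>0\}$. Integrating along a minimizing geodesic from $q$ to $x$ (with a standard approximation argument near $\{f=0\}$) yields $\sqrt{f(x)}\le\sqrt{f(q)}+\tfrac12\, r(q,x)$, which after squaring is the claimed upper estimate.

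The lower bound is where essentially all the work lies, and this is the step I expect to be the main obstacle. Fix a minimizing unit-speed geodesic $\gamma:[0,\ell]\to M$ from $q$ to $x$ with $\ell=r(q,x)$, set $\phi(s)=f(\gamma(s))$, and differentiate twice using \eqref{Eq1} along $\dot\gamma$ to obtain
\[
\phi''(s)=\mathrm{Hess}\,f(\dot\gamma,\dot\gamma)=\tfrac12-\mathrm{Ric}(\dot\gamma,\dot\gamma).
\]
The plan is to control $\int_0^\ell\mathrm{Ric}(\dot\gamma,\dot\gamma)\,ds$ from above by the index-form inequality for minimizing geodesics,
\[
\int_0^\ell\mathrm{Ric}(\dot\gamma,\dot\gamma)\,\psi^2\,ds\le (n-1)\int_0^\ell(\psi')^2\,ds,
\]
tested with a Lipschitz cut-off $\psi$ that equals $1$ on $[1,\ell-1]$ and is linear on the end intervals of length $1$. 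Combining the resulting bound with $\int_0^\ell\phi''\,ds=\phi'(\ell)-\phi'(0)=\ell/2-\int_0^\ell\mathrm{Ric}(\dot\gamma,\dot\gamma)\,ds$ and the endpoint estimate $|\phi'|\le|\nabla f|\le\sqrt{f}$ leads, after rearrangement, to an inequality of the shape $\sqrt{f(x)}\ge\tfrac12\, r(q,x)-\sqrt{f(q)}-C(n)$. The strategy is classical; the delicate part is optimizing the cut-off $\psi$ and handling the endpoint contributions precisely enough so that $C(n)$ comes out as sharply as $2n-\tfrac23$, which upon squaring yields the specific constant $4n-\tfrac43$ inside $(\cdot)_+^2$ in the statement. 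In practice one can replace the cut-off argument by the Haslhofer--M\"uller trick of tracking an integral curve of $\nabla f/|\nabla f|^2$, along which $f$ decreases linearly, to achieve the sharp constants.

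For the existence of $p$ with $f(p)\le n/2$ and the resulting simplified estimate, tracing \eqref{Eq1} gives $\mathrm{S}+\Delta f=n/2$. The lower bound from the previous step shows $f\to\infty$ at infinity, so $f$ attains its global infimum at some $p\in M$; there $\nabla f(p)=0$ and $\Delta f(p)\ge 0$, hence $\mathrm{S}(p)\le n/2$, and then \eqref{condition} at $p$ forces $f(p)=\mathrm{S}(p)\le n/2$. Substituting $q=p$ with $\sqrt{f(p)}\le\sqrt{n/2}$ into the first part, using $2\sqrt{n/2}=\sqrt{2n}$ on the right and the inequality $2\sqrt{n/2}+4n-\tfrac43\le 5n$ (valid for $n\ge 1$) on the left, produces the simplified two-sided bound.
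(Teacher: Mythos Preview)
The paper does not actually prove this lemma; it is stated with attribution to Cao--Zhou \cite{[CaZh]} and Haslhofer--M\"uller \cite{[HaMu]} (see also \cite{[Chowetc]}) and then used as a black box. Your proposal is precisely a sketch of the standard argument from those references: the upper bound via $\mathrm{S}\ge 0$ and $|\nabla\sqrt{f}|\le 1/2$, the lower bound via the second-variation (index-form) inequality along a minimizing geodesic with a piecewise-linear cut-off, and the minimum value of $f$ via the traced soliton equation at a critical point. So your approach coincides with what the paper invokes, and the outline is correct.

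One small point worth tightening in your write-up of the lower bound: the index-form inequality bounds $\int_0^\ell \psi^2\,\mathrm{Ric}(\dot\gamma,\dot\gamma)\,ds$, not $\int_0^\ell \mathrm{Ric}(\dot\gamma,\dot\gamma)\,ds$ directly, so the passage from the weighted integral to control of $\phi'(\ell)-\phi'(0)$ requires an additional estimate of the endpoint contributions $\int_0^1(1-\psi^2)\,\mathrm{Ric}\,ds$ and $\int_{\ell-1}^\ell(1-\psi^2)\,\mathrm{Ric}\,ds$ (using $\mathrm{Ric}(\dot\gamma,\dot\gamma)=\tfrac12-\phi''$ and integration by parts there). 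This is exactly where the sharp constant $2n-\tfrac23$ is produced in Haslhofer--M\"uller, and your sketch glosses over it; but since you correctly flag this step as the delicate one and point to the reference, the proposal is sound.
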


Chen \cite{[Chen]} proved that the scalar curvature of shrinkers has a lower
bound
\[
\mathrm{S}\ge 0.
\]
Pigola, Rimoldi and Setti \cite{[PiRS]} showed that the scalar curvature
$\mathrm{S}$ is strictly positive, unless $(M,g,f)$ is the Gaussian
shrinking Ricci soliton. By Lemma \ref{potenesti} and \eqref{condition},
the scalar curvature naturally has an upper bound
\begin{equation}\label{scaup}
\mathrm{S}(x)\le\frac 14\left(r(p,x)+\sqrt{2n}\right)^2
\end{equation}
for all $x\in M$. This upper bound will be used in this paper.

Recently, Li and Wang \cite{[LiWa]} applied the monotonicity of Perelman's
functional along Ricci flow and the invariance of Perelman's functional under
diffeomorphism actions to obtain (logarithmic) Sobolev inequalities on
complete shrinkers.
\begin{lemma}\label{sobineq}
Let $(M,g, f)$ be an $n$-dimensional shrinker satisfying \eqref{Eq1},
\eqref{condition} and \eqref{condmu}. Then for any $\varphi\in C^{\infty}_0(M)$
with $\int_M\varphi^2dv=1$ and any $\tau>0$,
\begin{equation}\label{LSI}
\mu+n+\frac n2\ln(4\pi)\le\tau\int_M\left(4|\nabla\varphi|^2+\mathrm{S}\varphi^2\right)dv-\int_M\varphi^2\ln \varphi^2dv-\frac n2\ln \tau.
\end{equation}
Moreover, for any $u\in C^{\infty}_0(M)$,
\begin{equation}\label{sobo}
\left(\int_Mu^{\frac{2n}{n-2}}dv\right)^{\frac{n-2}{n}}\le c(n)e^{-\frac{2\mu}{n}}\int_M\left(4|\nabla u|^2+\mathrm{S}u^2\right) dv.
\end{equation}
\end{lemma}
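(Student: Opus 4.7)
The plan is to derive both inequalities from Perelman's $\mathcal{W}$-entropy. Under the substitution $\varphi^{2}=(4\pi\tau)^{-n/2}e^{-f}$ subject to $\int\varphi^{2}\,dv=1$, Perelman's $\mathcal{W}$-functional assumes the form
\begin{align*}
\mathcal{W}(g,\varphi,\tau)=\tau\int_{M}(4|\nabla\varphi|^{2}+\mathrm{S}\varphi^{2})\,dv-\int_{M}\varphi^{2}\ln\varphi^{2}\,dv-\tfrac{n}{2}\ln(4\pi\tau)-n,
\end{align*}
and a direct rearrangement identifies \eqref{LSI} with the scale-comparison statement $\mu(g,\tau)\ge\mu(g,1)$ for every $\tau>0$, where $\mu(g,\tau):=\inf_{\varphi}\mathcal{W}(g,\varphi,\tau)$; by \eqref{condmu}, $\mu(g,1)=\mu$. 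So the task reduces to a comparison of Perelman's invariant at different scales.

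For this I would work with the canonical self-similar Ricci flow $g(t)=(1-t)\phi_{t}^{*}g$ associated to the shrinker, defined for $t\in(-\infty,1)$ with $g(0)=g$ and $\phi_{t}$ the one-parameter family of diffeomorphisms generated by $\nabla f/(1-t)$. Combining diffeomorphism invariance of $\mu$ with the scaling identity $\mu(cg,c\tau)=\mu(g,\tau)$ yields
\begin{align*}
\mu(g(t),T-t)=\mu\!\left(g,\tfrac{T-t}{1-t}\right)
\end{align*}
for every admissible $T,t$. Perelman's classical monotonicity of $t\mapsto\mu(g(t),T-t)$ along the Ricci flow then translates directly into monotonicity of $t\mapsto\mu(g,(T-t)/(1-t))$ in $t$.

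Given any $\tau>0$, I would set $T=\tau$ and observe that $(T-t)/(1-t)=\tau$ at $t=0$, while $(T-t)/(1-t)\to 1$ as $t\to-\infty$, in both cases $\tau>1$ and $\tau<1$. The monotonicity then delivers $\mu(g,1)\le\mu(g,\tau)$, which is \eqref{LSI}. The main obstacle here is justifying the limit $t\to-\infty$; this requires lower semicontinuity of $\mu(g,\cdot)$ at $\sigma=1$, which follows from the smooth dependence of $\mathcal{W}(g,\varphi,\sigma)$ on $\sigma$ for a fixed test function $\varphi$ combined with the existence of near-optimizers for $\mu(g,1)$ (the normalization \eqref{condmu} together with \eqref{condition} guarantees that $\varphi_{0}^{2}=(4\pi)^{-n/2}e^{-f}e^{-\mu}$ itself is an admissible critical point near $\tau=1$).

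To pass from \eqref{LSI} to the Sobolev inequality \eqref{sobo}, I would fix nonnegative $u\in C_{0}^{\infty}(M)$ with $\int u^{2}\,dv=1$, apply \eqref{LSI} with $\varphi=u$, and optimize the right-hand side in $\tau$; the minimizer $\tau^{*}=n/(2\int(4|\nabla u|^{2}+\mathrm{S}u^{2})\,dv)$ converts \eqref{LSI} into a sharp Gross-type log-Sobolev inequality bounding the entropy $\int u^{2}\ln u^{2}\,dv$ by $\tfrac{n}{2}\ln\bigl(\int(4|\nabla u|^{2}+\mathrm{S}u^{2})\,dv\bigr)$ plus an additive constant involving $\mu$. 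A standard Davies-Gross semigroup argument (or equivalently a Nash-to-Sobolev upgrade via Moser iteration) then converts this log-Sobolev inequality into \eqref{sobo}, with the factor $e^{-2\mu/n}$ arising naturally when the additive $\mu$-term is exponentiated through the iteration; homogeneity removes the normalization $\int u^{2}\,dv=1$ to yield the stated form.
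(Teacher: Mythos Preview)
The paper does not give its own proof of this lemma; it is quoted from Li--Wang \cite{[LiWa]}, with the paper remarking only that the argument uses ``the monotonicity of Perelman's functional along Ricci flow and the invariance of Perelman's functional under diffeomorphism actions.'' Your proposal reconstructs exactly this Li--Wang strategy: the identification of \eqref{LSI} with $\mu(g,\tau)\ge\mu(g,1)$, the self-similar flow $g(t)=(1-t)\phi_t^*g$, the scaling--diffeomorphism identity $\mu(g(t),T-t)=\mu(g,(T-t)/(1-t))$, and Perelman's monotonicity are precisely the ingredients the paper alludes to, and your passage from the optimized log-Sobolev inequality to \eqref{sobo} via a Davies--type semigroup argument is likewise the standard route used in \cite{[LiWa]}. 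So your approach and the one the paper cites coincide.

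One small remark: your handling of the limit $t\to-\infty$ (equivalently, the semicontinuity of $\sigma\mapsto\mu(g,\sigma)$ at $\sigma=1$) is the only genuinely delicate point, and you flag it correctly. The paper itself uses the inequality $\mu(g,\tau)\ge\mu(g,1)$ for $\tau\in(0,1)$ later (in the proof of Lemma~\ref{logeq2}) by citing Lemma~15 of \cite{[LiWa]}, so this step is indeed where the work lies; your proposed justification via the explicit minimizer $\varphi_0^2=(4\pi)^{-n/2}e^{-f-\mu}$ at $\tau=1$ is adequate.
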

The above inequalities are useful for understanding the geometry and topology
for shrinkers; see some recent works \cite{[LiWa]}, \cite{[MSW]}, \cite{[Wu21]}
and \cite{[Wu]}. In the following sections, we will apply them to study the
volume growth of shrinkers.

We start to discuss some applications of the above lemmas. First, applying Lemma
\ref{potenesti}, we can provide a relative volume comparison with center at any
a base point for large geodesic balls. Similar volume comparison was ever
considered by Carrillo and Ni \cite{[CaNi]} under some extra assumption.
\begin{theorem}\label{relcompar}
Let $(M,g,f)$ be a shrinker satisfying \eqref{Eq1}. For any point
$q\in M$,
\[
\frac{\mathrm{Vol}(B(q,R))}{\mathrm{Vol}(B(q,r))}\le 2\left(\frac{R+c}{r-c}\right)^n
\]
for all $R\ge r\ge 2\sqrt{n}+c$. In particular, for any $0<\alpha<1$,
\[
\frac{\mathrm{Vol}(B(q,R))}{\mathrm{Vol}(B(q,\alpha R))}\le 2\left(1+\frac{2}{\alpha}\right)^n
\]
for all $R\ge2\alpha^{-1}(\sqrt{n}+c)$. Here $c:=2\sqrt{f(q)}+4n-4/3$.
\end{theorem}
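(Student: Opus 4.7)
The plan is to convert the geodesic-ball comparison into a comparison between sub-level sets of the potential $f$, for which the soliton structure provides a clean polynomial growth estimate. Writing $D(s) := \{x\in M : 4f(x) \le s^2\}$ and observing that the constant $c = 2\sqrt{f(q)}+4n-4/3$ is precisely the shift appearing in Lemma~\ref{potenesti}, the two-sided bound of that lemma (using $2\sqrt{f(q)} \le c$ for the upper bound) yields the sandwich
$$D(r-c) \subset B(q,r) \subset D(r+c)$$
whenever $r \ge c$. Consequently
$$\frac{\mathrm{Vol}(B(q,R))}{\mathrm{Vol}(B(q,r))} \le \frac{\mathrm{Vol}(D(R+c))}{\mathrm{Vol}(D(r-c))},$$
and the task reduces to a polynomial growth estimate for $V(s):=\mathrm{Vol}(D(s))$.

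The analytic input I would pursue is matching two-sided bounds $C_2\,s^n \le V(s) \le C_1\,s^n$ on the range $s \ge 2\sqrt{n}$, with ratio $C_1/C_2 \le 2$. For the upper bound I would follow Cao-Zhou: the trace of the soliton identity gives $\Delta f = n/2 - \mathrm{S}$, and combined with $|\nabla f|^2 = f - \mathrm{S}$, the divergence theorem on $D(s)$ and the co-area formula produce a differential inequality for $V(s)$ whose integration gives $V(s) \le C_1 s^n$ with an explicit dimensional constant (this is the refined Cao-Zhou statement alluded to in the introduction as Lemma~\ref{logeq1}). For the lower bound, I would apply the Li-Wang (logarithmic) Sobolev inequality from Lemma~\ref{sobineq} to a test function built from a cutoff of $e^{-f/2}$, as in Li-Wang; this produces a matching lower bound $V(s) \ge C_2 s^n$ for $s$ large enough, with $C_2$ calibrated so that $C_1/C_2 \le 2$ (this is the content of Lemmas~\ref{logeq2} and \ref{slogeq}). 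The threshold $r \ge 2\sqrt{n}+c$ is designed so that $(r-c)^2 \ge 4n$, which places $D(r-c)$ inside the regime where the lower bound is valid; note that the infimum point $p$ of $f$ satisfies $f(p) \le n/2$, so $D(2\sqrt{n}) \ni p$.

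Chaining the sandwich with the $V$-estimate gives
$$\frac{\mathrm{Vol}(B(q,R))}{\mathrm{Vol}(B(q,r))} \le \frac{V(R+c)}{V(r-c)} \le 2\left(\frac{R+c}{r-c}\right)^n,$$
which is the first assertion. For the second assertion, put $r = \alpha R$; an elementary algebraic check shows that $(R+c)/(\alpha R - c) \le 1 + 2/\alpha$ exactly when $R \ge 2c/\alpha$, and the standing hypothesis $R \ge 2\alpha^{-1}(\sqrt{n}+c)$ simultaneously secures this and the underlying condition $\alpha R \ge 2\sqrt{n}+c$.

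The main obstacle is the second step: the raw Cao-Zhou and Munteanu-Wang bounds identify $V(s) \asymp s^n$ only up to an unspecified dimensional constant, which would spoil the explicit coefficient $2$ in the theorem. The refined upper estimate (Lemma~\ref{logeq1}) and the sharpened lower estimates (Lemmas~\ref{logeq2} and \ref{slogeq}), which the paper develops precisely for this purpose, are what make the constant $2$ emerge cleanly; once they are in place, the rest of the argument is the bookkeeping displayed above.
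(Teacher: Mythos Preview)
Your sandwich $D(r-c)\subset B(q,r)\subset D(r+c)$ is correct and is also the paper's first move. The gap is in how you pass from balls to the sublevel sets. You propose to obtain \emph{absolute} two-sided bounds $C_2 s^n\le V(s)\le C_1 s^n$ with $C_1/C_2\le 2$, appealing to Lemma~\ref{logeq1} for the upper bound and Lemmas~\ref{logeq2}--\ref{slogeq} for the lower bound. This does not work: the upper bound in Lemma~\ref{logeq1} carries the constant $c(n)e^{f(q)}$, while the lower bounds in Lemmas~\ref{logeq2}--\ref{slogeq} carry $c(n)e^{\mu}$ together with a scalar-curvature term (e.g.\ $(1+\Lambda r^2)^{-n/2}$). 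These constants have no reason to match, and their ratio is certainly not $\le 2$ in general. In addition, Lemma~\ref{slogeq} itself \emph{uses} Theorem~\ref{relcompar} in its proof (see the estimates \eqref{est1}--\eqref{est2}), so invoking it here would be circular.

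The paper's actual mechanism is a direct \emph{relative} comparison for $V(s)$, obtained by integrating the Cao--Zhou identity \eqref{VRrel} in the form $(s^{-n}V(s))'=4s^{-n-2}\chi'(s)-2s^{-n-1}\chi(s)$ from $r$ to $R$ and exploiting the monotonicity of $\chi$. This yields $V(R)\le r^{-n}V(r)R^n+4R^{-2}\chi(R)$; the factor $2$ then appears because $\chi(R)\le\tfrac{n}{2}V(R)$ (from integrating $\mathrm{S}+\Delta f=\tfrac{n}{2}$ over $D(R)$), so $4R^{-2}\chi(R)\le\tfrac12 V(R)$ once $R\ge 2\sqrt{n}$, and the last term is absorbed into the left side. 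No separate lower volume bound is needed. Your outline identifies the right differential identity but stops short of this absorption step, replacing it with an absolute-bounds strategy that cannot deliver the stated constant.
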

\begin{proof}[Proof of Theorem \ref{relcompar}]
The proof is essentially contained in the argument of Cao and Zhou
\cite{[CaZh]}, and we include it for the completeness. Define
\[
\rho(x):=2\sqrt{f(x)}.
\]
By Lemma \ref{potenesti},
\[
r(q,x)-c\le\rho(x)\le r(q,x)+c,
\]
where $c=2\sqrt{f(q)}+4n-4/3$. Denote by
\[
D(r):=\{x\in M|\rho(x)<r\}\quad  \mathrm{and}\quad V(r):=\int_{D(r)}dv.
\]
We trace \eqref{Eq1} and get
\[
\mathrm{S}+\Delta f=\frac n2.
\]
Integrating this equality and using some properties on shrinkers,
Cao and Zhou \cite{[CaZh]} established the following interesting equality:
\begin{equation}\label{VRrel}
n V(r)-rV'(r)=2\int_{D(r)}\mathrm{S}\,dv-2\int_{\partial D(r)}\frac{\mathrm{S}}{|\nabla f|}dv.
\end{equation}
Letting
\[
\chi(r):=\int_{D(r)}\mathrm{S}\,dv,
\]
then by the co-area formula, \eqref{VRrel} can be rewritten as
\[
n V(r)-rV'(r)=2\chi(r)-\frac{4}{r}\chi'(r),
\]
that is,
\[
(r^{-n}V(r))'=4r^{-n-2}\chi'(r)-2r^{-n-1}\chi(r).
\]
Integrating this from  $r$ to $R$ yields
\begin{equation*}
\begin{aligned}
R^{-n}V(R)-r^{-n}V(r)&=4R^{-n-2}\chi(R)-4r^{-n-2}\chi(r)\\
&\quad+2\int^R_rt^{-n-3}\chi(t)\left(2(n+2)-t^2\right)dt.
\end{aligned}
\end{equation*}
For the last term of the above equality, since $\chi(t)$ is positive
and increasing in $t$, then for any $R\ge r\ge \sqrt{2(n+2)}$, we have
\begin{equation*}
\begin{aligned}
2\int^R_rt^{-n-3}\chi(t)\left(2(n+2)-t^2\right)dt&\le2\chi(r)\int^R_rt^{-n-3}\left(2(n+2)-t^2\right)dt\\
&=2\chi(r)\left(-2t^{-n-2}+\frac{t^{-n}}{n}\right){\bigg|}^R_r\\
&=-4R^{-n-2}\chi(r)+4r^{-n-2}\chi(r)+\frac2n \chi(r)(R^{-n}-r^{-n}).
\end{aligned}
\end{equation*}
Hence,
\[
R^{-n}V(R)-r^{-n}V(r)\le 4R^{-n-2}\left(\chi(R)-\chi(r)\right)+\frac2n \chi(r)(R^{-n}-r^{-n})
\]
for $R\ge r\ge \sqrt{2(n+2)}$. Therefore,
\begin{equation}\label{ineq1}
V(R)\le (r^{-n}V(r))R^n+4R^{-2}\chi(R)
\end{equation}
for all $R\ge r\ge \sqrt{2(n+2)}$.

On the other hand, for any $R\ge2\sqrt{n}$, we have
\begin{equation}\label{ineq2}
4R^{-2}\chi(R)\le 2nR^{-2}V(R)\le \frac 12 V(R).
\end{equation}
Substituting \eqref{ineq2} into \eqref{ineq1} gives
\[
\frac{V(R)}{V(r)}\le 2\left(\frac Rr\right)^n
\]
for any $R\ge r\ge 2\sqrt{n}(\ge \sqrt{2(n+2)})$. This implies
\[
\frac{V(R+c)}{V(r-c)}\le 2\left(\frac{R+c}{r-c}\right)^n
\]
for $R\ge r\ge 2\sqrt{n}+c$, where $c:=2\sqrt{f(q)}+4n-4/3$. We also notice
\[
\mathrm{Vol}(B(q,R))\le V(R+c) \quad\mathrm{and}\quad
\mathrm{Vol}(B(q,r))\ge V(r-c)
\]
for any $R\ge0$ and $r\ge c$. Therefore,
\[
\frac{\mathrm{Vol}(B(q,R))}{\mathrm{Vol}(B(q,r))}\le 2\left(\frac{R+c}{r-c}\right)^n
\]
for $R\ge r\ge 2\sqrt{n}+c$, which proves the first part of theorem.

In particular, we choose $r=\alpha R$, where $0<\alpha<1$
and the above estimate becomes
\[
\frac{\mathrm{Vol}(B(q,R))}{\mathrm{Vol}(B(q,\alpha R))}\le 2\left(\frac{R+c}{\alpha R-c}\right)^n
\]
for $R\ge\alpha^{-1}(2\sqrt{n}+c)$. Furthermore, we let
$\alpha R-c>\frac{\alpha}{2}R$, that is, $R\ge2\alpha^{-1}c$, then
\[
\frac{\mathrm{Vol}(B(q,R))}{\mathrm{Vol}(B(q,\alpha R))}\le 2\left(1+\frac{2}{\alpha}\right)^n
\]
for $R\ge2\alpha^{-1}(\sqrt{n}+c)$. This finishes the second part of theorem.
 \end{proof}

Second, following the argument of \cite{[CaZh]}, we can apply Lemma \ref{potenesti}
to give a reverse relative volume comparison.
\begin{theorem}\label{relcompar2}
Let $(M,g,f)$ be a shrinker with a base point $q\in M$ satisfying
\eqref{Eq1}. If the scalar curvature $\mathrm{S}\le\sigma$ for some
constant $0<\sigma<n/2$, then
\[
\frac{\mathrm{Vol}(B(q,R))}{\mathrm{Vol}(B(q,r))}\ge\left(\frac{R-c}{r+c}\right)^{n-2\sigma}
\]
for all $R\ge r+2c$ and $r>0$, where $c:=2\sqrt{f(q)}+4n-4/3$.
\end{theorem}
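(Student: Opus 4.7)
The plan is to mimic the Cao--Zhou computation that drove Theorem \ref{relcompar}, but to extract a lower rather than an upper bound on $V(R)/V(r)$, where as before $\rho(x):=2\sqrt{f(x)}$, $D(r):=\{\rho<r\}$, $V(r):=\mathrm{Vol}(D(r))$, and $\chi(r):=\int_{D(r)}\mathrm{S}\,dv$. The starting point is the identity
\[
nV(r)-rV'(r)=2\chi(r)-\frac{4}{r}\chi'(r)
\]
from the proof of Theorem \ref{relcompar}, which I would rearrange to
\[
V'(r)=\frac{nV(r)-2\chi(r)}{r}+\frac{4\chi'(r)}{r^{2}}.
\]

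Next I would use the two-sided control on $\mathrm{S}$. Chen's bound $\mathrm{S}\ge0$ together with the nested exhaustion $D(r)$ gives $\chi'(r)\ge0$, so the last term is nonnegative and can be discarded when seeking a lower bound on $V'(r)$. The hypothesis $\mathrm{S}\le\sigma$ yields $\chi(r)\le\sigma V(r)$, whence $nV(r)-2\chi(r)\ge(n-2\sigma)V(r)$. Together these produce the differential inequality
\[
\frac{V'(r)}{V(r)}\ge\frac{n-2\sigma}{r}.
\]
Because $n-2\sigma>0$ by assumption, integrating from $r_{1}$ to $R_{1}$ with $0<r_{1}\le R_{1}$ gives the scale-invariant lower bound
\[
\frac{V(R_{1})}{V(r_{1})}\ge\left(\frac{R_{1}}{r_{1}}\right)^{n-2\sigma}.
\]

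The last step is to translate $V$-balls back into geodesic balls via Lemma \ref{potenesti}. The inequality $r(q,x)-c\le\rho(x)\le r(q,x)+c$ with $c:=2\sqrt{f(q)}+4n-4/3$ yields the inclusions $D(R-c)\subset B(q,R)$ and $B(q,r)\subset D(r+c)$, hence $\mathrm{Vol}(B(q,R))\ge V(R-c)$ and $\mathrm{Vol}(B(q,r))\le V(r+c)$. Choosing $R_{1}=R-c$ and $r_{1}=r+c$, the hypothesis $R\ge r+2c$ is exactly what is needed to ensure $R_{1}\ge r_{1}>0$ so that the previous estimate applies, and one reads off
\[
\frac{\mathrm{Vol}(B(q,R))}{\mathrm{Vol}(B(q,r))}\ge\frac{V(R-c)}{V(r+c)}\ge\left(\frac{R-c}{r+c}\right)^{n-2\sigma}.
\]

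There is no serious obstacle here: unlike in Theorem \ref{relcompar}, where a large-$r$ threshold such as $r\ge2\sqrt{n}+c$ was forced by the need to absorb the $\chi'$ and the $(2(n+2)-t^{2})$ terms into the right-hand side, for the reverse inequality both $\chi\ge0$ and $\chi'\ge0$ point in our favour, so the ODE inequality is valid on all of $(0,\infty)$. The only care required is to keep track of the direction of every inequality, and to note that the strict inequality $\sigma<n/2$ is what guarantees the exponent $n-2\sigma$ is positive and hence that the logarithmic integration yields a genuine polynomial lower bound rather than a trivial one.
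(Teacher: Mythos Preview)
Your proof is correct and follows essentially the same route as the paper: from the Cao--Zhou identity you drop the nonnegative $\chi'$-term using $\mathrm{S}\ge0$, use $\chi\le\sigma V$ from the hypothesis to reach $(n-2\sigma)V(t)\le tV'(t)$, integrate, and then pass from $D$-balls to metric balls via the potential-distance comparison. The paper's own argument is identical in substance, only slightly more terse in its presentation of the differential inequality.
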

\begin{proof}[Proof of Theorem \ref{relcompar2}]
By \eqref{VRrel}, $\mathrm{S}\ge 0$ and our curvature assumption $\mathrm{S}\le\sigma$, we have
\[
(n-2\sigma) V(t)\le tV'(t)
\]
for any $t\ge 0$. Integrating this inequality from $r$ to $R$, we get
\[
\frac{V(R)}{V(r)}\ge\left(\frac{R}{r}\right)^{n-2\sigma}
\]
for any $R\ge r>0$. We also see that
\[
\mathrm{Vol}(B(q,r))\le V(r+c) \quad\mathrm{and}\quad
\mathrm{Vol}(B(q,R))\ge V(R-c)
\]
for any $r\ge0$ and $R\ge c$. Therefore,
\[
\frac{\mathrm{Vol}(B(q,R))}{\mathrm{Vol}(B(q,r))}\ge\frac{V(R-c)}{V(r+c)}\ge\left(\frac{R-c}{r+c}\right)^{n-2\sigma}
\]
for any $R\ge r+2c$ and $r>0$.
\end{proof}

Next we will discuss some volume estimates of geodesic balls on shrinkers. The
sharp upper volume estimate was first proved by Cao-Zhou (see Theorem 1.2 in
\cite{[CaZh]}), later a explicit coefficient was stated by Munteanu-Wang (see
Theorem 1.4 in \cite{[MuW14]}) by using a delicate generalized Laplace comparison.
Furthermore, Zhang \cite{[Zh]} proved a sharp quantitative upper volume of the
shrinker with scalar curvature bounded below; see also \cite{[Chowetc]}.
In the following we will improve previous upper volume estimates when $r$ is
not large.
\begin{lemma}\label{logeq1}
Let $(M,g, f)$ be an $n$-dimensional complete non-compact shrinker satisfying
\eqref{Eq1}, \eqref{condition} and \eqref{condmu}. For any point $q\in M$
and for all $r\ge 0$,
\[
\mathrm{Vol}(B(q,r))\le c(n)e^{f(q)}r^n.
\]
Moreover, if the scalar curvature $\mathrm{S}\ge\delta$
for some constant $\delta\ge 0$, then
\[
\mathrm{Vol}(B(q,r))\le c(n)e^{f(q)}e^{-\frac{\delta}{r^2}}\,r^{n-2\delta}
\]
for all $r\ge 2\sqrt{n+2}+c$, where $c:=2\sqrt{f(q)}+4n-4/3$; in particular,
if $p\in M$ is a infimum point of $f$, then
\[
\mathrm{Vol}(B(p,r))\le c(n)e^{-\frac{\delta}{r^2}}\,r^{n-2\delta}
\]
for all $r\ge c(n)$.
\end{lemma}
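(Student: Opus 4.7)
The plan is to extend the Cao--Zhou-style argument of Theorem \ref{relcompar}, using the entropy normalization \eqref{condmu} as a moderate-scale anchor to pin down the constant. Throughout I work with the $f$-level-set quantities $V(r):=\mathrm{Vol}(\{f<r^2/4\})$ and $\chi(r):=\int_{\{f<r^2/4\}}\mathrm{S}\,dv$, and translate back to geodesic balls at the end via the inclusion $B(q,r)\subset\{f<(r+c)^2/4\}$ with $c=2\sqrt{f(q)}+4n-4/3$ from Lemma \ref{potenesti}.

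For the unconditional bound, the proof of Theorem \ref{relcompar} already supplies the polynomial propagation $V(R)\le 2\,(r_0^{-n}V(r_0))\,R^n$ for $R\ge r_0\ge 2\sqrt n$, coming from \eqref{VRrel} together with the elementary bound $\chi\le\tfrac{n}{2}V$ (obtained by integrating the trace $\mathrm{S}+\Delta f=n/2$ over $D(r)$ and discarding the nonnegative boundary flux). I anchor this at the dimensional scale $r_0=2\sqrt{n+2}$ via \eqref{condmu}: since $f<n+2$ on $D(r_0)$,
\[
V(r_0)\le e^{n+2}\int_M e^{-f}\,dv=(4\pi)^{n/2}e^{n+2}e^\mu=c(n)e^\mu,
\]
so $V(R)\le c(n)e^\mu R^n$ for all $R\ge 2\sqrt{n+2}$. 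Then $\mathrm{Vol}(B(q,r))\le V(r+c)\le c(n)e^\mu(r+c)^n$ whenever $r+c\ge 2\sqrt{n+2}$. In the regime $r\ge c$ the inequality $(r+c)^n\le(2r)^n$ gives $\mathrm{Vol}(B(q,r))\le c(n)e^\mu r^n\le c(n)e^\mu e^{f(q)}r^n$; in the complementary regime $r<c$, combining $(r+c)^n\le(2c)^n\le c(n)(1+f(q)^{n/2})\le c(n)e^{f(q)}$ with a local Bishop--Gromov estimate at $q$ (valid below the local curvature scale) produces the same form. Finally, $e^\mu$ is absorbed into $c(n)$ via Lemma 2.5 of \cite{[LLW]}, which identifies $e^\mu$ with $\mathrm{Vol}(B(p,1))$ up to a dimensional factor; that quantity is bounded dimensionally by the same propagation applied at the infimum point $p$, where $f(p)\le n/2$ and $c_p\le c(n)$.

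For the scalar-refined estimate under $\mathrm{S}\ge\delta$, the lower bound $\chi\ge\delta V$ sharpens the derivation. Rewriting \eqref{VRrel} as $rV'=nV-2\chi+(4/r)\chi'$ and using $\chi\ge\delta V$ yields
\[
(r^{2\delta-n}V(r))'=-2r^{2\delta-n-1}(\chi-\delta V)+4r^{2\delta-n-2}\chi'(r)\le 4r^{2\delta-n-2}\chi'(r).
\]
Integrating from $r_0$ to $R$, integrating by parts the $\chi'$ term, and closing with $\chi\le\tfrac{n}{2}V$ together with a Gr\"onwall step produces $V(R)\le c(n)e^\mu R^{n-2\delta}e^{-\delta/R^2}$. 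The exponential refinement $e^{-\delta/r^2}$ emerges from working with an integrating factor of the form $r^{2\delta-n}e^{\delta/r^2}$, chosen so that the $\chi'$ contribution cancels against the slack $\chi-\delta V\ge 0$. Transferring back to geodesic balls via $B(q,r)\subset D(r+c)$ gives the second bound, and specializing to $q=p$ (where $f(p)\le n/2$ forces both $e^{f(p)}$ and $c$ to be bounded dimensionally) yields the third.

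The main obstacle I expect is extracting the exponential refinement $e^{-\delta/r^2}$: a naive ODE argument only produces the polynomial factor $r^{n-2\delta}$, and recovering the sharper decay forces one to select the integrating factor precisely so that the boundary contribution from $\int r^{2\delta-n-2}\chi'\,dr$ pairs against the scalar-slack $\chi-\delta V$. A secondary delicate point is the small-$r$ regime of the unconditional bound, where the entropy estimate and a local Bishop--Gromov-type bound must be patched consistently to produce the form $c(n)e^{f(q)}r^n$ uniformly for all $r\ge 0$.
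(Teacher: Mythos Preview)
Your treatment of the second (scalar-refined) estimate is correct and reaches the conclusion, though by a different route than the paper. The paper, following Zhang \cite{[Zh]}, works in the variable $t$ (the level of $f$) and tracks the single quantity $P(t)=\mathscr{V}(t)/t^{n/2}-\mathscr{R}(t)/t^{n/2+1}$, which satisfies the \emph{exact} first-order ODE
\[
P'(t)=-\frac{\bigl(1-\tfrac{n+2}{2t}\bigr)N(t)}{1-N(t)}\,P(t),\qquad N(t):=\frac{\mathscr{R}(t)}{t\,\mathscr{V}(t)}.
\]
The hypothesis $\mathrm{S}\ge\delta$ gives $N(t)\ge\delta/t$, and a single integration then produces both the $t^{-\delta}$ and the $e^{-(n+2)\delta/(2t)}$ factors at once, with no Gr\"onwall step required. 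Your differential-inequality route also works: after dropping $\chi-\delta V\ge 0$ and integrating by parts, the Gr\"onwall kernel is $r^{-3}$, whose integral is bounded and already yields an exponential factor $e^{-c(n)/R^2}$ at least as strong as the stated $e^{-\delta/R^2}$. So your remark that a special integrating factor $r^{2\delta-n}e^{\delta/r^2}$ is needed to extract the exponential is unnecessary---Gr\"onwall alone suffices. The paper's approach is simply cleaner because the combination $P$ obeys an equation rather than an inequality.

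For the first (unconditional) estimate there are two real gaps. First, your absorption of $e^\mu$ into $c(n)$ is circular as written: the comparison $e^\mu\asymp\mathrm{Vol}(B(p,1))$ from \cite{[LLW]} together with ``the same propagation'' only bounds $\mathrm{Vol}(B(p,1))$ back in terms of $e^\mu$. The correct reason is simply that $\mu\le 0$ on any shrinker (this is the content of Case one in the proof of Lemma~\ref{logeq2}). Second, and more seriously, your small-$r$ patch via ``local Bishop--Gromov'' is not available: a shrinker carries no pointwise Ricci lower bound, only the Bakry--\'Emery bound $\mathrm{Ric}+\mathrm{Hess}\,f=\tfrac12 g$, and $\mathrm{Hess}\,f$ is uncontrolled near a general point $q$. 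The paper does not attempt to prove this estimate---it cites Theorem~1.4 of \cite{[MuW14]}, whose argument is a genuinely different tool (a weighted Laplace comparison for the distance from $q$ exploiting the Bakry--\'Emery structure). Your entropy-anchor argument does recover the bound in the regime $r\ge c$, but the statement is claimed for all $r\ge 0$, and closing that gap requires the Munteanu--Wang comparison rather than classical Bishop--Gromov.
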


\begin{proof}[Proof of Lemma \ref{logeq1}]
The first estimate is Theorem 1.4 in \cite{[MuW14]}. So we only need to prove the
second and third estimates. We remark that the second estimate with a rough
coefficient has been proved by Zhang \cite{[Zh]} (see also \cite{[Chowetc]}).
Here, we need to figure out the accurate coefficients, which plays a key role
in our application.

For convenience of our computation, we adapt the notations of \cite{[Zh]}
(see also \cite{[Chowetc]}), which are sight different from those in \cite{[CaZh]}.
For any $t\in\mathbb{R}$, let 
\[
\{f<t\}:=\left\{x\in M|f(x)<t\right\}
\]
and define
\[
\mathscr{V}(t):=\int_{\{f<t\}}dv\quad \mathrm{and} \quad
\mathscr{R}(t):=\int_{\{f<t\}}\mathrm{S}dv.
\]
Notice that for any $q\in M$, $f(x)$ satisfies
\[
\frac 14\left[(r(x,q)-c)_{+}\right]^2\le f(x)\le\frac 14\left(r(x,q)+c\right)^2
\]
where $c:=2\sqrt{f(q)}+4n-4/3$. Therefore, if $r\ge c$, then
\[
\left\{f<\frac 14(r-c)^2\right\}\subset B(q,r)\subset\left\{f<\frac 14(r+c)^2\right\}
\]
and hence
\begin{equation}\label{twovolu}
\mathscr{V}\left(\frac 14(r-c)^2\right)\le \mathrm{Vol}(B(q,r))\le\mathscr{V}\left(\frac 14(r+c)^2\right).
\end{equation}
Using present notations, \eqref{VRrel} can be rewritten as
\begin{equation}\label{en1}
0\le\frac{n}{2}\mathscr{V}(t)-\mathscr{R}(t)=t\mathscr{V}
^{\prime}(t)-\mathscr{R}^{\prime}(t).
\end{equation}
For any $t>0$, let
\[
\operatorname{P}(t):=\frac{\mathscr{V}(t)
}{t^{\frac{n}{2}}}-\frac{\mathscr{R}(t)}{t^{\frac{n}{2}+1}}
\quad\mathrm{and}\quad
\operatorname{N}(t):=\frac{\mathscr{R}(t)}{t\mathscr{V}(t)}.
\]
Then \eqref{en1} implies
\begin{equation}\label{relaPN}
\begin{aligned}
\operatorname{P}^{\prime}(t)&=-\left(  1-\frac{n+2}{2t}\right)  \frac
{\mathscr{R}(t)}{t^{\frac{n}{2}+1}}\\
&=-\frac{\left(1-\frac{n+2}{2t}\right)  \operatorname{N}(t)}
{1-\operatorname{N}(t)}\operatorname{P}(t).
\end{aligned}
\end{equation}
This implies $\operatorname{P}(t)$ is decreasing and
\begin{equation}\label{relaPV}
\left(1-\frac{n}{2t}\right) \frac{\mathscr{V}(t)}{t^{\frac{n}{2}}}
\le\operatorname{P}(t)\le\frac{\mathscr{V}(t)}{t^{\frac{n}{2}}}
\end{equation}
for $t\ge n/2+1$, where we used $\frac{\mathscr{R}(t)}{\mathscr{V}(t)}\le n/2$.
Integrating equality \eqref{relaPN} gives
\[
\operatorname{P}(t)=\operatorname{P}(n+2) e^{-\int_{n+2}^t\frac{\left(
1-\frac{n+2}{2\tau}\right)  \operatorname{N}(\tau)}
{1-\operatorname{N}(\tau)}d\tau}
\]
for all $t\ge n+2$. Since $\mathrm{S}\ge\delta$, then $\operatorname{N}(\tau)\ge\delta/\tau$.
Also noticing that $\frac{\operatorname{N}(\tau)}{1-\operatorname{N}(\tau)}$
is increasing in $\operatorname{N}(\tau)$, hence the above equality can be estimated by
\begin{equation*}
\begin{aligned}
\operatorname{P}(t)&\le\operatorname{P}(n+2) e^{-\int_{n+2}^t\left(1-\frac{n+2}{2\tau}\right)\frac{\delta}{\tau-\delta}d\tau}\\
&\le\operatorname{P}(n+2) e^{-\int_{n+2}^t\left(1-\frac{n+2}{2\tau}\right)\frac{\delta}{\tau}d\tau}\\
&=\operatorname{P}(n+2) (n+2)^{\delta}e^{\frac{\delta}{2}}e^{-\frac{n+2}{2t}\delta}\,t^{-\delta}
\end{aligned}
\end{equation*}
for all $t\ge n+2$. Combining this with \eqref{relaPV},
\begin{equation}\label{mathvup}
\mathscr{V}(t)\le c(n)\operatorname{P}(n+2) e^{-\frac{n+2}{2t}\delta}\, t^{\frac n2-\delta}
\end{equation}
for all $t\ge n+2$, where we used $\delta< n/2$. By Lemma \ref{potenesti},
since $B(q, 2\sqrt{t}-c)\subset \{f<t\}$, where $c:=2\sqrt{f(q)}+4n-4/3$,
combining \eqref{twovolu}, it follows that
\[
\mathrm{Vol}\left(B(q, 2\sqrt{t}-c)\right)\le\mathscr{V}(t)
\]
for $t\ge c^2/4$. Combining this with \eqref{mathvup} yields
\[
\mathrm{Vol}\left(B(q, 2\sqrt{t}-c)\right)\le c(n)\operatorname{P}(n+2)
e^{-\frac{n+2}{2t}\delta}\, t^{\frac n2-\delta}
\]
for all $t\ge n+2+c^2/4$, so that
\[
\mathrm{Vol}\left(B(q, r)\right)\le c(n)\operatorname{P}(n+2)
e^{-\frac{2(n+2)\delta}{(r+c)^2}}\left(\frac{r+c}{2}\right)^{n-2\delta}
\]
for all $r\ge2\sqrt{n+2}$. Noticing that
\[
\operatorname{P}(n+2)\le\frac{\mathscr{V}(n+2)}{(n+2)^{\frac{n}{2}}}
\quad\mathrm{and}\quad
\mathscr{V}(n+2)\le \mathrm{Vol}\left(B(q,2\sqrt{n+2}+c)\right), \]
then \[
\mathrm{Vol}\left(B(q, r)\right)\le c(n)\mathrm{Vol}\left(B(q,2\sqrt{n+2}+c)\right)
e^{-\frac{\delta}{r^2}}\,r^{n-2\delta}
\]
for all $r\ge2\sqrt{n+2}+c$. Therefore the second estimate follows
by applying the first estimate of Lemma \ref{logeq1}
\begin{equation*}
\begin{aligned}
\mathrm{Vol}\left(B(q,2\sqrt{n+2}+c)\right)&\le c(n)e^{f(q)}(2\sqrt{n+2}+c)^n\\
&\le c(n)e^{f(q)},
\end{aligned}
\end{equation*}
where we used a fact that
\[
e^{f(q)}(2\sqrt{n+2}+c)^n\le c(n)e^{f(q)}f(q)^{n/2}\le\widetilde{c}(n)e^{f(q)}.
\]

Finally, the third estimate of the lemma follows by the second estimate and
a basic fact $f(p)\le n/2$.
\end{proof}

\begin{remark}\label{levcon}
The above argument shows that the point-wise condition of scalar curvature in Lemma
\ref{logeq1} can be replaced by a condition of the average scalar curvature over
the level set $\{f<r\}$, that is,
\[
\frac{1}{\int_{\{f<r\}}dv}\int_{\{f<r\}}\mathrm{S}\,dv\ge\delta
\]
for any $r>0$. This is because we only used $\frac{\mathscr{R}(t)}{\mathscr{V}(t)}\ge\delta$
in the proof of Lemma \ref{logeq1}.
\end{remark}

For a lower volume estimate, a sharp version was proved by Munteanu-Wang
(see Theorem 1.6 in \cite {[MuWa12]} or Theorem 1.4 in \cite{[MuW14]}).
But coefficients of these estimates all depend on a base point, which
will be trouble in dealing with our issue. So in the following we shall
adopt a Li-Wang's local lower volume estimate for any base point, which
comes from the Sobolev inequality (see Theorem 23 in \cite{[LiWa]}). This
estimate is more useful when $r$ is sufficiently large.
\begin{lemma}\label{logeq2}
Let $(M,g, f)$ be an $n$-dimensional complete non-compact shrinker satisfying
\eqref{Eq1}, \eqref{condition} and \eqref{condmu}. For any point $q\in M$ and for any $r>0$,
\[
\frac{\mathrm{Vol}(B(q,r))}{r^n}
\left[1+\sup_{s\in[0,r]}\frac{s^2\int_{B(q,s)}\mathrm{S}\,dv}{\mathrm{Vol}(B(q,s))}\right]^{n/2}
\ge c(n)e^{\mu}.
\]
In particular, if the scalar curvature $\mathrm{S}\le\Lambda$ for some
constant $\Lambda\ge 0$ in $B(q,r)\subset M$, then
\[
\frac{\mathrm{Vol}(B(q,r))}{r^n}(1+\Lambda r^2)^{n/2}\ge c(n)e^{\mu}.
\]
\end{lemma}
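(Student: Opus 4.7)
The plan is to combine the Sobolev inequality \eqref{sobo} with a standard cutoff and iterate downward in scale. Write $V(s):=\mathrm{Vol}(B(q,s))$, $\mathcal{R}(s):=\int_{B(q,s)}\mathrm{S}\,dv$, and set
\[
T:=\sup_{s\in[0,r]}\frac{s^2\mathcal{R}(s)}{V(s)},
\]
so the target inequality is equivalent to $V(r)\ge c(n)\, r^n e^{\mu}(1+T)^{-n/2}$.

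For each $s\in(0,r]$, I would plug the Lipschitz tent $u(x):=\min\{1,\,2(1-d(q,x)/s)_{+}\}$---which equals $1$ on $B(q,s/2)$, vanishes outside $B(q,s)$, and satisfies $|\nabla u|\le 2/s$ a.e.---into \eqref{sobo}. Using $\int u^{2n/(n-2)}\,dv\ge V(s/2)$, $\int|\nabla u|^2\,dv\le 4V(s)/s^2$, and $\int \mathrm{S}u^2\,dv\le\mathcal{R}(s)$ gives the one-scale recursion
\[
V(s/2)^{(n-2)/n}\le c(n)e^{-2\mu/n}\,\frac{V(s)}{s^2}\Bigl(1+\frac{s^2\mathcal{R}(s)}{V(s)}\Bigr)\le C^{\ast}\,\frac{V(s)}{s^2},
\]
where $C^{\ast}:=c(n)e^{-2\mu/n}(1+T)$ and $\alpha:=(n-2)/n<1$.

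Rewriting as $V(s)\ge s^{2}V(s/2)^{\alpha}/C^{\ast}$ and iterating $k$ times along $r_k:=r/2^k$ yields
\[
V(r)\ge \frac{V(r_k)^{\alpha^k}\,r^{2\sum_{j=0}^{k-1}\alpha^j}\,2^{-2\sum_{j=0}^{k-1}j\alpha^j}}{(C^{\ast})^{\sum_{j=0}^{k-1}\alpha^j}}.
\]
As $k\to\infty$, the geometric series sum to $\sum_{j\ge 0}\alpha^j=n/2$ and $\sum_{j\ge 0}j\alpha^j=n(n-2)/4$, so the explicit factors collapse to $c(n)\,r^n/(C^{\ast})^{n/2}$. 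The boundary term $V(r_k)^{\alpha^k}$ tends to $1$: smoothness of $g$ at $q$ gives $V(r_k)=O(r_k^n)$, hence $\alpha^k\log V(r_k)=O(k\alpha^k)\to 0$. Substituting $C^{\ast}$ and rearranging produces the first inequality of the lemma, and the particular case is immediate from $T\le\Lambda r^2$ when $\mathrm{S}\le\Lambda$ on $B(q,r)$.

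The Sobolev step is a routine cutoff computation; the real subtlety is the multiplicative compounding of constants through the iteration. One must check that the boundary factor $V(r_k)^{\alpha^k}\to 1$ uniformly in $\mu$ and $T$, and that the final constant depends only on $n$---this is precisely why the iteration has to be taken to the limit rather than stopped at finite $k$. A minor but important point is that the quadratic gradient contribution and the scalar-curvature contribution combine naturally inside the single factor $1+s^2\mathcal{R}(s)/V(s)$, which is what produces the clean supremum appearing in the statement.
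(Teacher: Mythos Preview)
Your iteration argument is correct and gives a valid proof of the lemma, but it is genuinely different from the paper's route. The paper does not iterate. Instead it picks, once and for all, a scale $r_0\in[0,r]$ at which $s\mapsto V(s)/s^n$ attains its infimum. If $r_0>0$, a \emph{single} application of the Sobolev inequality on $B(q,r_0)$ combines with the free ``reverse doubling'' $V(r_0/2)\ge 2^{-n}V(r_0)$ (which follows from the minimality of $r_0$) to give $V(r_0)/r_0^n\ge c(n)e^{\mu}(1+T)^{-n/2}$; the definition of $r_0$ then passes this to $V(r)/r^n$. The degenerate case $r_0=0$ is handled separately by the observation that $V(r)\ge\omega_n r^n$ together with the fact $\mu\le 0$ (proved via pointed convergence to Euclidean space at small scales).

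What each approach buys: the paper's minimizing-scale trick is shorter and avoids any bookkeeping, but it relies on the auxiliary fact $\mu\le 0$ to close the boundary case. Your iteration is more mechanical---one must track the geometric series and the decay of $V(r_k)^{\alpha^k}$---but it is self-contained: the small-ball asymptotic $V(r_k)\sim\omega_n r_k^n$ replaces the separate $\mu\le 0$ argument, so you never need to invoke Perelman's entropy monotonicity beyond the Sobolev inequality itself. Both require $n\ge 3$ through the Sobolev exponent. Your remark that the limit $V(r_k)^{\alpha^k}\to 1$ is independent of $\mu$ and $T$ is exactly the point that makes the final constant depend only on $n$.
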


\begin{proof}[Proof of Lemma \ref{logeq2}]
The argument is essentially the same as the proof of Theorem 23 in
\cite{[LiWa]}. For the reader's convience, we provide the detailed proof.
For a base point $q\in M$, we choose $r_0\in[0,r]$ such that
\[
\inf_{s\in[0,r]}\frac{\mathrm{Vol}(B(q,s))}{s^n}
\]
is attained at $r_0$. Below we discuss two cases $r_0=0$ and $r_0>0 $ separately.

Case one: $r_0=0$. We have
\[
\mathrm{Vol}(B(q,r))\ge \omega_n r^n,
\]
where $\omega_n$ is the volume of the unit Euclidean $n$-ball. Now we
claim that $\mu\le 0$. Indeed, for $\tau\to 0+$, we have that $(M^n,p,\tau^{-1}g)$
converges to Euclidean space $(\mathbb{R}^n,0,g_E)$ smoothly in the
Cheeger-Gromov sense. By Lemma 3.2 of \cite{[Liy]}, we know
\[
\underset{\tau\to 0+}{\lim\sup}\,\mu(g,\tau)
=\underset{\tau\to 0+}{\lim\sup}\,\mu(\tau^{-1}g,1)
\le \mu(g_E,1)=0.
\]
Also, since $\mu(g,\tau)\ge \mu(g,1)=\mu$ for each $\tau\in(0,1)$ by Lemma 15
in \cite{[LiWa]}, then the claim $\mu\le 0$ follows. Hence the estimate
of Case one follows.

Case two: $r_0>0$. Let $\phi:\mathbb{R}\to[0,1]$ be a smooth function such
that $\phi(t)=1$ on $(-\infty,1/2]$, $\phi(t)=0$ on $[1,+\infty)$ and
$|\phi'|\le 2$ on $[0,\infty)$. For any point $q\in M$, let
\[
u(x):=\phi\left(\frac{r(q,x)}{r_0}\right).
\]
Clearly, $u$ is supported in $B(q, r_0)$ and it satisfies $|\nabla u|\le2r_0^{-2}$.
We substitute the above special function $u$ into \eqref{sobo} of Lemma \ref{sobineq}
and get
\begin{equation*}
\begin{aligned}
\mathrm{Vol}\left(B(q,\frac{r_0}{2})\right)^{\frac{n-2}{n}}
&\le c(n)e^{-\frac{2\mu}{n}}\int_{B(q,r_0)}\left(4|\nabla u|^2+\mathrm{S}u^2\right)dv\\
&\le c(n)e^{-\frac{2\mu}{n}}\frac{\mathrm{Vol}(B(q,r_0))}{r^2_0}
\left[1+\frac{r^2_0\int_{B(q,r_0)}\mathrm{S}\,dv}{\mathrm{Vol}(B(q,r_0)}\right].
\end{aligned}
\end{equation*}
From the choice of $r_0$, we see that
\[
\mathrm{Vol}\left(B(q,\frac{r_0}{2})\right)\ge \frac{\mathrm{Vol}(B(q,r_0))}{2^n}.
\]
Combining the above two inequalities yields
\[
\frac{\mathrm{Vol}(B(q,r_0))}{r^n_0}
\left[1+\frac{r^2_0\int_{B(q,r_0)}\mathrm{S}\,dv}{\mathrm{Vol}(B(q,r_0))}\right]^{n/2}
\ge c(n)e^{\mu}.
\]
According to the definition of $r_0$, we have
\[
\frac{\mathrm{Vol}(B(q,r))}{r^n}\ge \frac{\mathrm{Vol}(B(q,r_0))}{r^n_0}.
\]
Combining the above two inequalities gives the conclusion of Case two.
\end{proof}

At the end of this section, we give another version of lower volume estimate
by using the logarithmic Sobolev inequality \eqref{LSI}, which is sharper
than Lemma \ref{logeq2} when $r$ is not sufficiently large.
\begin{lemma}\label{slogeq}
Let $(M,g, f)$ be an $n$-dimensional complete non-compact shrinker satisfying \eqref{Eq1},
\eqref{condition} and \eqref{condmu}. For any point $q\in M$,
\begin{equation}\label{LSIequ}
\mu+n+\frac n2\ln(4\pi)+16(1-2\cdot 5^n)\le
+2\cdot 5^n r^2\frac{\int_{B(q,r)}\mathrm{S}\,dv}{\mathrm{Vol}(B(q,r))}
+\ln \frac{\mathrm{Vol}(B(q,r))}{r^n}
\end{equation}
for any $r\ge 4(\sqrt{n}+c)$, where $c:=2\sqrt{f(q)}+4n-4/3$.
\end{lemma}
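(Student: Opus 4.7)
The plan is to feed the logarithmic Sobolev inequality \eqref{LSI} an explicit radial cutoff built from a smooth cap on $B(q,r/2)$, set the free parameter to $\tau=r^2$, and then convert every term into the shape that appears in \eqref{LSIequ} by invoking the relative volume comparison of Theorem \ref{relcompar}. The threshold $r\ge 4(\sqrt{n}+c)$ in the statement is precisely the hypothesis of Theorem \ref{relcompar} with $\alpha=1/2$, so it will allow me to use the clean ratio
\[
\frac{\mathrm{Vol}(B(q,r))}{\mathrm{Vol}(B(q,r/2))}\le 2\cdot 5^n.
\]

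First I would pick $\psi:M\to[0,1]$ smooth with $\psi\equiv 1$ on $B(q,r/2)$, $\psi\equiv 0$ outside $B(q,r)$, and $|\nabla\psi|\le 2/r$ supported on the annulus $B(q,r)\setminus B(q,r/2)$. Normalizing $\varphi:=\psi/\|\psi\|_{L^2}$ gives $\int_M\varphi^2\,dv=1$, and the denominator obeys $\|\psi\|_{L^2}^2\ge \mathrm{Vol}(B(q,r/2))\ge (2\cdot 5^n)^{-1}\mathrm{Vol}(B(q,r))$ by the relative volume bound above. This supplies the two key estimates
\[
\int_M 4|\nabla\varphi|^2\,dv\le \frac{16(2\cdot 5^n-1)}{r^2},\qquad \int_M\mathrm{S}\varphi^2\,dv\le 2\cdot 5^n\,\frac{\int_{B(q,r)}\mathrm{S}\,dv}{\mathrm{Vol}(B(q,r))},
\]
where the first uses that $|\nabla\psi|^2$ lives on the annulus, whose volume is at most $(2\cdot 5^n-1)\mathrm{Vol}(B(q,r/2))$.

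Second, I would handle the entropy. Expanding $\ln\varphi^2=\ln\psi^2-\ln\|\psi\|_{L^2}^2$ and applying the elementary bound $-x\ln x\le 1/e$ to $\psi^2\in[0,1]$ gives
\[
-\int_M\varphi^2\ln\varphi^2\,dv\le \ln\|\psi\|_{L^2}^2+\frac{\mathrm{Vol}(B(q,r))}{e\|\psi\|_{L^2}^2}\le \ln\mathrm{Vol}(B(q,r))+\frac{2\cdot 5^n}{e},
\]
again using the relative volume ratio. With $\tau=r^2$, the extra term $-\tfrac{n}{2}\ln\tau=-n\ln r$ pairs with $\ln\mathrm{Vol}(B(q,r))$ to produce $\ln\bigl(\mathrm{Vol}(B(q,r))/r^n\bigr)$, while $\tau\cdot\int S\varphi^2\,dv$ gives the first summand on the right of \eqref{LSIequ}. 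Substituting everything into \eqref{LSI} and collecting the dimensional constants $16(2\cdot 5^n-1)$ and $2\cdot 5^n/e$ on the left recovers the stated inequality with left-hand constant $16(1-2\cdot 5^n)$.

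The delicate point is the constant bookkeeping: one must arrange the cutoff so that $|\nabla\psi|^2$ is sharply confined to $B(q,r)\setminus B(q,r/2)$ and exploit the same $2\cdot 5^n$ ratio uniformly for the gradient, the entropy, and the scalar curvature pieces, so that the residual constants collapse into the single factor $16(1-2\cdot 5^n)$. Once the constants are tracked carefully, rearranging the inequality yielded by \eqref{LSI} gives \eqref{LSIequ} directly.
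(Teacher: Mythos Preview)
Your proposal is essentially the paper's proof: the same radial cutoff supported in $B(q,r)$ and identically $1$ on $B(q,r/2)$, the same application of the logarithmic Sobolev inequality \eqref{LSI} with $\tau=r^2$, and the same use of Theorem \ref{relcompar} with $\alpha=1/2$ to control the gradient and scalar-curvature terms via the ratio $2\cdot 5^n$.

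The one substantive difference is the entropy estimate. You bound $-\int\varphi^2\ln\varphi^2\,dv$ via the pointwise inequality $-x\ln x\le 1/e$, which leaves a residual $2\cdot 5^n/e$. The paper instead applies Jensen's inequality to the concave function $H(t)=-t\ln t$ with respect to the volume measure on $B(q,r)$, obtaining directly
\[
-\int_{B(q,r)}\varphi^2\ln\varphi^2\,dv\le \ln\mathrm{Vol}(B(q,r))
\]
with no extra term. Consequently your constants do \emph{not} collapse to $16(1-2\cdot 5^n)$ as you assert in the last paragraph; your route yields the slightly weaker constant $16(1-2\cdot 5^n)-2\cdot 5^n/e$ on the left. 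That is still a valid inequality of the same shape and would suffice for every application in the paper, but to match \eqref{LSIequ} verbatim you need the Jensen step rather than the crude pointwise bound.
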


\begin{proof}[Proof of Lemma \ref{slogeq}]
Let $\phi:[0,\infty)\to[0,1]$
be a smooth cut-off function supported in $[0,1]$ such that $\phi(t)=1$ on
$[0,1/2]$ and $|\phi'|\le 2$ on $[0,\infty)$. For any $q\in M$ and any
$r>0$, let
\[
\varphi(x):=e^{-\theta/2}\phi\left(\frac{r(q,x)}{r}\right),
\]
where $\theta$ is some constant determined by condition $\int_M\varphi^2dv=1$.
Clearly, $\varphi$ is supported in $B(p,r)$ and it satisfies
$|\nabla\varphi|\le 2r^{-1}\cdot e^{-\theta/2}$. Moreover, $\theta$ satisfies
\[
\mathrm{Vol}\left(B(q,\frac r2)\right)\le e^{\theta}\int_M\varphi^2dv=e^{\theta}
\]
and
\[
e^{\theta}=e^{\theta}\int_M\varphi^2dv
=\int_M\phi^2\left(\frac{r(q,x)}{r}\right)dv\le\mathrm{Vol}(B(q,r)).
\]

Now we shall substitute the above cut-off function $\varphi$ into Lemma 
\ref{sobineq} to simplify the inequality \eqref{LSI}.

First, by the definition of $\varphi$ and lower bound of $e^{\theta}$, we have
\begin{equation}\label{est1}
\begin{aligned}
4\tau\int_M |\nabla\varphi|^2dv&=4\tau\int_{B(q,r)\backslash B(q,\frac r2)} |\nabla\varphi|^2dv\\
&\le\frac{16\tau}{r^2}\left[\mathrm{Vol}(B(q,r))-\mathrm{Vol}\left(B(q,\frac r2)\right)\right]e^{-\theta}\\
&\le\frac{16\tau}{r^2}\left[\frac{\mathrm{Vol}(B(q,r))}{\mathrm{Vol}\left(B(q,\frac r2)\right)}-1\right]\\
&\le 16(2\cdot 5^n-1)\frac{\tau}{r^2}
\end{aligned}
\end{equation}
for all $r\ge 4(\sqrt{n}+c)$, where $c:=2\sqrt{f(q)}+4n-4/3$. In the last
inequality, we used Theorem \ref{relcompar} in the following form:
\[
\frac{\mathrm{Vol}(B(q,r))}{\mathrm{Vol}(B(q,\frac r2))}\le 2\cdot5^n
\]
for any $r\ge 4(\sqrt{n}+c)$.

Second, by the definition of $\varphi$ and the lower bound of $e^{\theta}$,
we have the estimate
\begin{equation}\label{est2}
\begin{aligned}
\tau\int_M\mathrm{S}\varphi^2 dv&\le \tau e^{-\theta}\int_{B(q,r)}\mathrm{S}\,dv\\
&\le\frac{\tau}{\mathrm{Vol}\left(B(q,\frac r2)\right)}\int_{B(q,r)}\mathrm{S}\,dv\\
&\le2\cdot5^n\frac{\tau\int_{B(q,r)}\mathrm{S}\,dv}{\mathrm{Vol}(B(q,r))}
\end{aligned}
\end{equation}
for all $r\ge 4(\sqrt{n}+c)$, where $c:=2\sqrt{f(q)}+4n-4/3$. Here we still used
Theorem \ref{relcompar} in the above last inequality.

Third, we will apply the Jensen's inequality to estimate the term:
$-\int_M\varphi^2\ln \varphi^2dv$. Since smooth function $H(t):=-t\ln t$ is concave
in $t>0$ and the Riemannian measure $dv$ is supported in $B(q,r)$, by
the following Jensen's inequality
\[
\frac{\int H(\varphi^2)dv}{\int dv}\leq H\left(\frac{\int \varphi^2 dv}{\int dv}\right)
\]
and the definition of $H(t)$, we obtain
\[
-\frac{\int_{B(q,r)}\varphi^2\ln\varphi^2dv}{\int_{B(q,r)}dv}
\leq-\frac{\int_{B(q,r)}\varphi^2dv}{\int_{B(q,r)}dv}\ln\left(\frac{\int_{B(q,r)}\varphi^2dv}{\int_{B(q,r)}dv}\right).
\]
Since $\int_{B(q,r)}\varphi^2dv=1$, we further have a simple form
\[
-\int_{B(q,r)}\varphi^2\ln\varphi^2dv\le\ln \mathrm{Vol}(B(q,r)).
\]
Therefore,
\begin{equation}\label{est3}
\begin{aligned}
-\int_M\varphi^2\ln\varphi^2dv&=-\int_{B(q,r)}\varphi^2\ln\varphi^2dv\\
&\le\ln \mathrm{Vol}(B(q,r)).
\end{aligned}
\end{equation}

Now we substitute \eqref{est1}, \eqref{est2} and \eqref{est3} into \eqref{LSI}
and get that
\[
\mu+n+\frac n2\ln(4\pi)\le 16(2\cdot 5^n-1)\frac{\tau}{r^2}
+2\cdot5^n\frac{\tau\int_{B(q,r)}\mathrm{S}\,dv}{\mathrm{Vol}(B(q,r))}
+\ln \frac{\mathrm{Vol}(B(q,r))}{\tau^{\frac n2}}
\]
for any $\tau>0$ and for any $r\ge 4(\sqrt{n}+c)$, where $c:=2\sqrt{f(q)}+4n-4/3$.
Finally we let $\tau=r^2$ and the result follows.
\end{proof}

%
\section{Ends on a general shrinker}\label{sec3}
In this section, we will give a weak ball covering property depending
on the radius of a general shrinker without any assumption. Then we will
apply the weak ball covering to prove Theorem \ref{endest}. With the
help of Lemmas \ref{logeq1} and \ref{logeq2}, we first establish a
weak volume comparison condition on shrinkers.
\begin{proposition}\label{vd}
Let $(M,g, f)$ be an $n$-dimensional complete non-compact shrinker with a
infimum point $p\in M$ of $f$ satisfying \eqref{Eq1}, \eqref{condition} and
\eqref{condmu}. If the scalar curvature $\mathrm{S}\ge\delta$ for some
constant $\delta\ge 0$, then for any $r\ge c(n)$ and for any
$x\in \overline{B(p,2r)}$,
\[
\frac{\mathrm{Vol}(B(p,2r))}{\mathrm{Vol}\left(B(x,\tfrac{r}{8})\right)}\le c(n)e^{-\mu}r^{2(n-\delta)}.
\]
In addition, if the scalar curvature $\mathrm{S}\le \sigma$
for some constant $\sigma\ge \delta$ in $M$, then
\[
\frac{\mathrm{Vol}(B(p,2r))}{\mathrm{Vol}\left(B(x,\tfrac{r}{8})\right)}\le c(n)e^{-\mu}\sigma^{n/2}r^{n-2\delta}
\]
for any point $x\in M$ and for any $r\ge c(n)$.
\end{proposition}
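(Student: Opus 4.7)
The plan is to bound the numerator $\mathrm{Vol}(B(p,2r))$ and the denominator $\mathrm{Vol}(B(x,r/8))$ of the ratio separately and divide, using Lemma \ref{logeq1} (applied at the infimum point) for the upper bound and Lemma \ref{logeq2} (applied at $x$) for the lower bound. The pointwise scalar curvature estimate \eqref{scaup} is the bridge that makes Lemma \ref{logeq2} effective at an arbitrary $x\in\overline{B(p,2r)}$.

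For the numerator, since $p$ is an infimum point with $f(p)\le n/2$, the third estimate of Lemma \ref{logeq1} applied with radius $2r$ yields
\[
\mathrm{Vol}(B(p,2r))\le c(n)\,e^{-\delta/(2r)^2}(2r)^{n-2\delta}\le c(n)\,r^{n-2\delta}
\]
for $r\ge c(n)$. For the denominator, I would apply Lemma \ref{logeq2} at $x$ with radius $r/8$. To control the bracketed supremum of averaged scalar curvature, note that $x\in\overline{B(p,2r)}$ implies $B(x,r/8)\subset B(p,3r)$, so \eqref{scaup} gives
\[
\mathrm{S}(y)\le \tfrac14\bigl(3r+\sqrt{2n}\bigr)^{2}\le c(n)\,r^{2}
\]
for every $y\in B(x,r/8)$ once $r\ge c(n)$. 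Consequently
\[
\sup_{s\in[0,r/8]}\frac{s^{2}\int_{B(x,s)}\mathrm{S}\,dv}{\mathrm{Vol}(B(x,s))}\le (r/8)^{2}\cdot c(n)r^{2}\le c(n)\,r^{4},
\]
and rearranging Lemma \ref{logeq2} gives
\[
\mathrm{Vol}(B(x,r/8))\ge c(n)\,e^{\mu}(r/8)^{n}\bigl(c(n)r^{4}\bigr)^{-n/2}\ge c(n)\,e^{\mu}\,r^{-n}.
\]
Dividing the two estimates produces the first claimed bound $c(n)\,e^{-\mu}\,r^{2(n-\delta)}$.

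For the second assertion, the global hypothesis $\mathrm{S}\le\sigma$ removes the need for \eqref{scaup} and for the restriction $x\in\overline{B(p,2r)}$: the average of $\mathrm{S}$ on any ball is at most $\sigma$, so the bracket in Lemma \ref{logeq2} is bounded by $1+\sigma(r/8)^{2}\le c(n)\sigma r^{2}$ in the relevant range, improving the denominator lower bound to $\mathrm{Vol}(B(x,r/8))\ge c(n)\,e^{\mu}\sigma^{-n/2}$. Dividing then yields the sharper ratio $c(n)\,e^{-\mu}\sigma^{n/2}r^{n-2\delta}$.

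I do not anticipate any serious obstacle: given the two preceding lemmas, the argument is essentially bookkeeping. The one point requiring care is confirming that a single radius threshold $r\ge c(n)$ (depending only on $f(p)\le n/2$ and on the additive constants $\sqrt{2n}$ and $2\sqrt{f(p)}+4n-4/3$) suffices for every intermediate inequality to apply simultaneously, and that the various implicit $c(n)$ factors can be absorbed consistently.
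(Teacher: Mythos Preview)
Your proposal is correct and follows essentially the same route as the paper: bound $\mathrm{Vol}(B(p,2r))$ from above via Lemma~\ref{logeq1} at the infimum point, bound $\mathrm{Vol}(B(x,r/8))$ from below via Lemma~\ref{logeq2} using the pointwise scalar curvature estimate \eqref{scaup} (or the global bound $\mathrm{S}\le\sigma$ for the second part), and divide. The only cosmetic difference is that the paper invokes the second, ``$\Lambda$''-form of Lemma~\ref{logeq2} directly, whereas you go through the supremum form; the two are equivalent here since the scalar curvature bound is pointwise on the relevant ball.
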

\begin{proof}[Proof of Proposition \ref{vd}]
By Lemma \ref{logeq1}, we have
\begin{equation}\label{upp}
\mathrm{Vol}(B(p,2r))\le c(n)r^{n-2\delta}
\end{equation}
for any $r\ge c(n)$. On the other hand, the second estimate of Lemma \ref{logeq2}
shows that
\begin{equation}\label{low}
\mathrm{Vol}(B(x,r))\ge c(n)e^{\mu}r^n(1+\Lambda r^2)^{-n/2}
\end{equation}
with $\mathrm{S}\le\Lambda$ in $B(x,r)\subset M$. Now we want to find an upper
bound of scalar curvature $\mathrm{S}$ in $B(x,r)$. From \eqref{scaup}, we know
\[
\mathrm{S}(y)\le\frac 14\left(r(y,p)+\sqrt{2n}\right)^2
\]
for all $y\in B(x,r)$. Since $x\in\overline{B(p,2r)}$, by the triangle inequality,
we further have
\begin{equation*}
\begin{aligned}
\mathrm{S}(y)&\le\frac 14\left(r(y,x)+r(x,p)+\sqrt{2n}\right)^2\\
&\le\frac 14\left(r+2r+\sqrt{2n}\right)^2\\
&\le\frac 14(3+\sqrt{2})^2r^2
\end{aligned}
\end{equation*}
for all $y\in B(x,r)$ and for all $r\ge\sqrt{n}$. Substituting this into \eqref{low} yields
\[
\mathrm{Vol}(B(x,r))\ge c(n)e^{\mu}r^{-n}
\]
for all $r\ge\sqrt{n}$. Combining this with \eqref{upp} immediately yields the first estimate of theorem.

Next we will prove the second part of theorem. Since we also assume that
$\mathrm{S}\le \sigma$ for some constant $\sigma\ge\delta$ in $M$,
substituting this into \eqref{low}, we have
\[
\mathrm{Vol}(B(x,r))\ge c(n)e^{\mu}\sigma^{-n/2}
\]
for any point $x\in M$ and any $r\ge 1$. Combining this with \eqref{upp} gives the second
estimate.
\end{proof}

Inspired by Liu's argument \cite{[Liu2]}, we shall apply Proposition \ref{vd} to
give a weak ball covering property for sufficiently large balls in a shrinker without
any assumption. Our argument will be focused on a sufficiently large fixed radius.
\begin{theorem}\label{sdest}
Let $(M,g,f)$ be a complete non-compact shrinker with a infimum point $p\in M$ of $f$
satisfying \eqref{Eq1}, \eqref{condition} and \eqref{condmu}. If the scalar curvature
$\mathrm{S}\ge\delta$ for some constant $\delta\ge 0$, then for sufficiently large
$r\ge c(n)$, there exists 
\[
N=c(n)e^{-\mu}r^{2(n-\delta)}
\]
such that we can find points
$p_1,\ldots, p_k\in B(p,2r)\backslash\overline{B(p,r)}$, where $k=k(r)\le N$, with
\[
\bigcup^k_{i=1}B\left(p_i,\frac{r}{4}\right)\supset B(p,2r)\backslash\overline{B(p,r)}.
\]
\end{theorem}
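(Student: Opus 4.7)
The plan is to apply a maximal disjoint packing argument in the annulus $A := B(p,2r)\setminus\overline{B(p,r)}$ and then bound the number of packing centers using the volume estimates already established.

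First I would choose a maximal collection of points $p_1,\ldots,p_k\in A$ such that the balls $B(p_i,r/8)$ are pairwise disjoint. By maximality, for every $y\in A$ the ball $B(y,r/8)$ must meet some $B(p_i,r/8)$, hence $d(y,p_i)<r/4$. This immediately gives the desired covering $\bigcup_{i=1}^{k}B(p_i,r/4)\supset A$, and places every $p_i$ inside $A\subset\overline{B(p,2r)}$, so the location condition on the centers is automatic.

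To bound $k$, I would compare volumes. Since the $B(p_i,r/8)$ are pairwise disjoint and each center satisfies $d(p,p_i)<2r$, they all lie in $B(p,3r)$; therefore
\[
\mathrm{Vol}(B(p,3r))\;\ge\;\sum_{i=1}^{k}\mathrm{Vol}(B(p_i,r/8)).
\]
For $r\ge c(n)$ and each $p_i\in\overline{B(p,2r)}$, Proposition \ref{vd} applies and supplies the uniform lower bound
\[
\mathrm{Vol}(B(p_i,r/8))\;\ge\;\frac{\mathrm{Vol}(B(p,2r))}{c(n)e^{-\mu}\,r^{2(n-\delta)}}.
\]
On the other hand, since $B(p,3r)$ and $B(p,2r)$ are concentric at the base point $p$, the relative volume comparison Theorem \ref{relcompar} (applied with $\alpha=2/3$ and $R=3r$) yields
\[
\mathrm{Vol}(B(p,3r))\;\le\;c(n)\,\mathrm{Vol}(B(p,2r))
\]
for all sufficiently large $r$. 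Combining the three displayed inequalities gives $k\le c(n)e^{-\mu}\,r^{2(n-\delta)}=N$, as required.

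The crux of the argument is Proposition \ref{vd}, which provides a uniform lower volume bound at every $p_i$ even though the balls $B(p_i,r/8)$ are not concentric with $p$; this is precisely the weak volume comparison engineered in Section \ref{volcom} to substitute for the absent classical relative volume comparison at arbitrary centers. Were the packing centers allowed to drift far from $p$, the scalar curvature upper bound \eqref{scaup} used in the proof of Proposition \ref{vd} would grow uncontrollably, so restricting the packing to the annulus $A$ around the base point is essential.
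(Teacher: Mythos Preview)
Your proof is correct and follows essentially the same approach as the paper's: a maximal $r/8$-packing of the annulus, then the combination of Proposition~\ref{vd} with Theorem~\ref{relcompar} to bound $k$ via the inclusion $\bigcup_i B(p_i,r/8)\subset B(p,3r)$. The only cosmetic difference is that the paper routes the relative volume comparison through the intermediate radii $\beta_i r$ (writing $p_i\in\partial B(p,\beta_i r)$), whereas you compare $B(p,3r)$ directly to $B(p,2r)$; your version is slightly more streamlined but the substance is identical.
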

\begin{proof}[Proof of Theorem \ref{sdest}]
For a sufficiently large fixed $r\ge c(n)$, we let
$k:=k(r)$ denote the maximum number of disjoint geodesic balls of radius
$r/8$ with centers $p_1,\ldots, p_k$ in $B(p,2r)\backslash\overline{B(p,r)}$.
Obviously, in this case,
\[
\bigcup^k_{i=1}B\left(p_i,\frac{r}{4}\right)\supset B(p,2r)\backslash\overline{B(p,r)}.
\]
See Figure 1 for a detailed description.
\begin{figure}
    \centering
\includegraphics[scale=0.6]{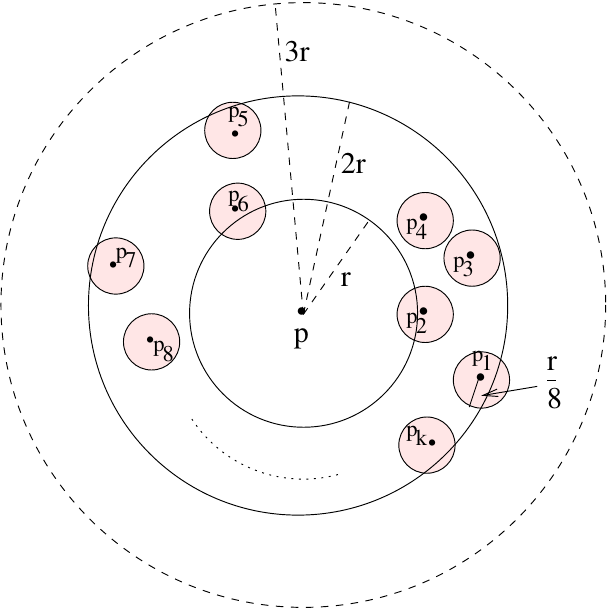}
\caption{ \footnotesize Annulus is covered by small balls}
\label{Fig1}
\end{figure}

Since $p_i\in B(p,2r)\backslash\overline{B(p,r)}$, we may let $p_i\in\partial B(p,\beta_i r)$
for some $1<\beta_i<2$, where $i=1,\ldots,k$. By the first estimate of Proposition \ref{vd}, we have
\begin{equation*}
\begin{aligned}
\mathrm{Vol}(B(p,\beta_i r))&\le\mathrm{Vol}(B(p,2r))\\
&\le c(n)e^{-\mu}r^{2(n-\delta)}\mathrm{Vol}\left(B(p_i,\frac{r}{8})\right)
\end{aligned}
\end{equation*}
for $r\ge c(n)$. By Theorem \ref{relcompar}, we also have
\[
\mathrm{Vol}(B(p,3r))\le2\left(1+\frac{6}{\beta_i}\right)^n\mathrm{Vol}\left(B(p,\beta_i r)\right)
\]
for $r\ge2(\sqrt{n}+c)$, where $c:=2\sqrt{n/2}+4n-4/3$ and $i=1,\ldots,k$. Combining the above
two estimates, for each $i$,
\[
\mathrm{Vol}(B(p,3r))\le c(n)e^{-\mu}r^{2(n-\delta)} \mathrm{Vol}\left(B(p_i,\frac{r}{8})\right)
\]
for $r\ge c(n)$, where we used $1<\beta_i<2$. Summing the above $k$ inequalities,
we get
\[
k(r)\mathrm{Vol}(B(p,3r))\le c(n)e^{-\mu}r^{2(n-\delta)}
\sum^k_{i=1}\mathrm{Vol}\left(B(p_i,\frac{r}{8})\right)
\]
for $r\ge c(n)$. On the other hand, we easily see that
\[
\sum^k_{i=1}\mathrm{Vol}\left(B(p_i,\frac{r}{8})\right)\le \mathrm{Vol}(B(p,3r)).
\]
Combining the above two estimates gives
\[
k(r)\le c(n)e^{-\mu}r^{2(n-\delta)}
\]
for $r\ge c(n)$, which completes the proof.
\end{proof}
\begin{remark}\label{reN1}
In Theorem \ref{sdest}, if the scalar curvature also satisfies $\mathrm{S}\le\sigma$
for some constant $\sigma\ge\delta$ in $M$, then for a sufficiently large $r$, we
can choose an $(n-2\delta)$-degree as follows:
\[
N=c(n)e^{-\mu}\sigma^{n/2}r^{n-2\delta}.
\]
\end{remark}

The above weak ball covering property immediately implies Theorem \ref{endest}.
\begin{proof}[Proof of Theorem \ref{endest}]
Let $(M,g,f)$ be an $n$-dimensional complete non-compact shrinker satisfying
\eqref{Eq1}, \eqref{condition} and \eqref{condmu}. Since the number of ends on
the shrinker is independent of the choice of the base point, we can choose a
infimum point $p$ of $f$ as a base point in $M$.

Given a sufficiently large fixed number $r$, let
\[
N_1=c(n)e^{-\mu}r^{2(n-\delta)}
\]
as in Theorem \ref{sdest}. That is we can find points $p_1,\ldots, p_k\in B(p,2r)\backslash\overline{B(p,r)}$, where $k=k(r)\le N_1$, with
\[
\bigcup^k_{i=1}B\left(p_i,\frac{r}{4}\right)\supset B(p,2r)\backslash\overline{B(p,r)}.
\]
Next we will prove Theorem \ref{endest} by a contradiction argument.

If Theorem \ref{endest} is not true, that is, the number of ends grows faster
than polynomial growth with degree $2(n-\delta)$, then for the above mentioned
sufficiently large $r$, there exists more than
\[
\widetilde{N}_1=c(n)e^{-\mu}r^{2(n-\delta)+\epsilon},
\]
where $\epsilon>0$ is any small constant, unbounded ends $E_j$ with respect to
$\overline{B(p,r)}$.

It is obvious that geodesic balls of radius $r/4$ with centers in different
components $E_j\cap B(p,2r)$ do not intersect. Thus we need at least
$\widetilde{N}_1$ geodesic balls of radius $r/4$ to cover the sets
$E_j\cap B(p,2r)\subset B(p,2r)\backslash\overline{B(p,r)}$, which
contradicts Theorem \ref{sdest}.
\end{proof}

\begin{remark}\label{reN2}
For Theorem \ref{endest}, if the scalar curvature $\mathrm{S}\le\sigma$ for some
constant $\sigma\ge\delta$ in $M$, then we can apply Remark \ref{reN1} to the above
argument and get the same conclusion whereas the degree $2(n-\delta)$ of
polynomial growth can be reduced to $n-2\delta$.
\end{remark}

%
\section{Ends with volume comparison condition}\label{sec4}
In this section we will discuss the finite number of ends when the shrinker satisfies
volume comparison condition. In this case we first give a ball covering property,
which is similar to the manifold case of nonnegative Ricci curvature.

\begin{theorem}\label{shrendest}
Let $(M,g,f)$ be an $n$-dimensional complete non-compact shrinker with a base
point $q\in M$ satisfying volume comparison condition. There exists a constant
\[
N=N(n,\eta)
\]
depending only on $n$ and $\eta$ such that for any $r\ge2(\sqrt{n}+c)+r_0$,
where $c:=2\sqrt{f(q)}+4n-4/3$, we can find
$p_1,\ldots, p_k\in B(q,2r)\backslash\overline{B(q,r)}$, $k\le N$, with
\[
\bigcup^k_{i=1}B\left(p_i,\frac{r}{4}\right)\supset B(q,2r)\backslash\overline{B(q,r)}.
\]
\end{theorem}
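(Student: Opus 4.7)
The plan is to mimic the proof of Theorem \ref{sdest} almost verbatim, with Proposition \ref{vd} replaced by the hypothesized volume comparison condition and with the relative volume comparison of Theorem \ref{relcompar} applied at the base point $q$. The base point is now $q$ (not the infimum point $p$ of $f$), but this causes no trouble because Theorem \ref{relcompar} was established at an arbitrary base point, with the constant $c:=2\sqrt{f(q)}+4n-4/3$ absorbing the dependence on $q$.

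First I would choose $p_1,\dots,p_k$ in $B(q,2r)\setminus\overline{B(q,r)}$ to be a maximal collection with the property that the balls $B(p_i,r/8)$ are pairwise disjoint. Maximality immediately forces $\bigcup_i B(p_i,r/4)$ to cover the annulus $B(q,2r)\setminus\overline{B(q,r)}$, so the entire task reduces to proving $k\le N(n,\eta)$.

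Write $p_i\in\partial B(q,\beta_i r)$ with $1<\beta_i<2$. Since $\beta_i r\ge r\ge r_0$, the volume comparison condition applies at the point $p_i$ on $\partial B(q,\beta_i r)$ and gives
\[
\mathrm{Vol}(B(q,\beta_i r))\le\eta\,\mathrm{Vol}\!\left(B\!\left(p_i,\tfrac{\beta_i r}{16}\right)\right)\le\eta\,\mathrm{Vol}\!\left(B\!\left(p_i,\tfrac{r}{8}\right)\right),
\]
where the second inequality uses $\beta_i\le 2$. Next, applying Theorem \ref{relcompar} at the base point $q$ with $R=3r$ and $\alpha=\beta_i/3\in(1/3,2/3)$ (the hypothesis $R\ge 2\alpha^{-1}(\sqrt{n}+c)$ is guaranteed by $r\ge 2(\sqrt{n}+c)$, which in turn follows from the hypothesis $r\ge 2(\sqrt{n}+c)+r_0$), one obtains
\[
\mathrm{Vol}(B(q,3r))\le 2\!\left(1+\tfrac{6}{\beta_i}\right)^{\!n}\!\mathrm{Vol}(B(q,\beta_i r))\le 2\cdot 7^{n}\mathrm{Vol}(B(q,\beta_i r)).
\]
Combining the two displays yields, for each $i$,
\[
\mathrm{Vol}(B(q,3r))\le 2\cdot 7^{n}\eta\,\mathrm{Vol}\!\left(B\!\left(p_i,\tfrac{r}{8}\right)\right).
\]

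Finally I would sum the $k$ inequalities, and observe that the balls $B(p_i,r/8)$ are disjoint and all contained in $B(q,3r)$ (since $\beta_i r+r/8<3r$), giving
\[
k\cdot\mathrm{Vol}(B(q,3r))\le 2\cdot 7^{n}\eta\sum_{i=1}^{k}\mathrm{Vol}\!\left(B\!\left(p_i,\tfrac{r}{8}\right)\right)\le 2\cdot 7^{n}\eta\,\mathrm{Vol}(B(q,3r)).
\]
Dividing through, we conclude $k\le N(n,\eta):=2\cdot 7^{n}\eta$, as required. There is no substantive obstacle here; the only thing to verify carefully is that the threshold $r\ge 2(\sqrt{n}+c)+r_0$ is enough to simultaneously legalize (i) the volume comparison condition (which needs $\beta_i r\ge r_0$) and (ii) the application of Theorem \ref{relcompar} with $\alpha=\beta_i/3$ (which needs $r\ge 2(\sqrt{n}+c)$), and both requirements are built into the stated range of $r$.
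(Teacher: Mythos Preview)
Your proof is correct and follows essentially the same approach as the paper's own proof: a maximal $r/8$-separated packing of the annulus, the volume comparison condition to compare $\mathrm{Vol}(B(q,\beta_i r))$ with $\mathrm{Vol}(B(p_i,r/8))$, Theorem \ref{relcompar} to pass from $B(q,\beta_i r)$ to $B(q,3r)$, and then summation using disjointness. The only difference is cosmetic---you make the constant $N(n,\eta)=2\cdot 7^n\eta$ explicit and spell out the threshold verification more carefully, whereas the paper leaves the constant as an unspecified $C(n,\eta)$.
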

\begin{proof}[Proof of Theorem \ref{shrendest}]
Let $k$ be the maximum number of disjoint geodesic
balls of radius $r/8$ with centers $p_1,\ldots, p_k$ in
$B(q,2r)\backslash\overline{B(q,r)}$. Here we choose $r$ sufficiently large such
that $r\ge2(\sqrt{n}+c)+r_0$. Clearly,
\[
\bigcup^k_{i=1}B\left(p_i,\frac{r}{4}\right)\supset B(q,2r)\backslash\overline{B(q,r)}.
\]
Since $p_i\in B(q,2r)\backslash\overline{B(q,r)}$, we may let $p_i\in\partial B(q,\beta_i r)$
for some constant $1<\beta_i<2$, where $i=1,\ldots,k$. By the volume comparison condition,
we have
\begin{equation*}
\begin{aligned}
\mathrm{Vol}(B(q,\beta_i r))&\le\eta \mathrm{Vol}\left(B(p_i,\frac{\beta_ir}{16})\right)\\
&\le\eta \mathrm{Vol}\left(B(p_i,\frac{r}{8})\right)
\end{aligned}
\end{equation*}
for all $r\ge r_0$. By Theorem \ref{relcompar}, we see that
\[
\mathrm{Vol}(B(q,3r))\le2\left(1+\frac{6}{\beta_i}\right)^n\mathrm{Vol}(B(q,\beta_i r))
\]
for $r\ge2\beta_i^{-1}(\sqrt{n}+c)$, where $i=1,\ldots,k$. Combining the above
two estimates, for each $i$, there exists a constant $C(n,\eta)$ depending only on $n$
and $\eta$ such that
\[
\mathrm{Vol}(B(q,3r))\le C(n,\eta)
\mathrm{Vol}\left(B(p_i,\frac{r}{8})\right)
\]
for $r\ge2(\sqrt{n}+c)+r_0$, where $c:=2\sqrt{f(q)}+4n-4/3$ and we used
$1<\beta_i<2$, where $i=1,\ldots,k$. This implies
\[
k\mathrm{Vol}(B(q,3r))\le C(n,\eta)
\sum^k_{i=1}\mathrm{Vol}\left(B(p_i,\frac{r}{8})\right)
\]
for $r\ge2(\sqrt{n}+c)+r_0$. On the other hand,
\[
\sum^k_{i=1}\mathrm{Vol}\left(B(p_i,\frac{r}{8})\right)\le \mathrm{Vol}(B(q,3r))
\]
Combining the above two inequalities yields $k\le C(n,\eta)$
and the result follows.
\end{proof}

Similar to the preceding discussion in Section \ref{sec4}, we can apply Theorem
\ref{shrendest} to prove Theorem \ref{main1}. Here we include it for the
completeness.
\begin{proof}[Proof Theorem \ref{main1}]
Under the assumption of Theorem \ref{main1}, we let $N_2=N(n,\eta)$
as in Theorem \ref{shrendest}. If Theorem \ref{main1} is not true, we can
take $r$ large enough such that there exist more than $N_2$ unbounded ends
$E_j$ with respect to $\overline{B(q,r)}$.

Because $E_j\cap B(q,2r)$ lie in $B(q,2r)\backslash\overline{B(q,r)}$ and geodesic
balls of radius $r/4$ with centers in different components $E_j\cap B(q,2r)$
do not intersect. That is, we need more than $N_2$ geodesic balls of radius
$r/4$ to cover $E_j\cap B(q,2r)$, which contradicts Theorem \ref{shrendest}.
\end{proof}

In the rest of this section, we will discuss four sufficient assumptions such that a
class of shrinkers satisfies volume comparison condition. As we all know, if
$(M,g)$ has nonnegative Ricci curvature everywhere, then it satisfies the
volume comparison condition. Indeed the volume doubling property sufficiently
leads to volume comparison condition.
\begin{proposition}\label{voldoub}
Let $(M,g)$ be an $n$-dimensional complete manifold satisfying the volume
doubling property. Then for all $0<r<R<\infty$ and all $x\in M$ and
$y\in\overline{B(x,R)}$,
\[
\frac{\mathrm{Vol}(B(x,R))}{\mathrm{Vol}(B(y,r))}\le D^2\left(\frac{R}{r}\right)^\kappa,
\]
where $\kappa=\log_2D$. In particular, $(M,g)$ satisfies volume comparison condition.
\end{proposition}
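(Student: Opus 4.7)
The plan is to derive a single polynomial volume growth bound on balls at an arbitrary center from the doubling property by iteration, and then combine it with the inclusion $B(x,R)\subset B(y,2R)$ coming from the triangle inequality on $\overline{B(x,R)}$.

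\smallskip

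First, I would iterate the doubling property: applied $k$ times at the same center $y$ it gives $\mathrm{Vol}(B(y,2^k r))\le D^k\,\mathrm{Vol}(B(y,r))$ for every integer $k\ge 0$ and every $r>0$. For an arbitrary $s\ge r$, I would choose the unique integer $k\ge 0$ with $2^{k-1}r<s\le 2^k r$, so $k\le 1+\log_2(s/r)$. Using monotonicity of volume in the radius and the iterated doubling inequality,
\[
\mathrm{Vol}(B(y,s))\le \mathrm{Vol}(B(y,2^k r))\le D^k\,\mathrm{Vol}(B(y,r))\le D\cdot\left(\frac{s}{r}\right)^{\log_2 D}\mathrm{Vol}(B(y,r)).
\]
Writing $\kappa=\log_2 D$, this is the basic polynomial bound.

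\smallskip

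Second, to move the center from $y$ to $x$: since $y\in\overline{B(x,R)}$, the triangle inequality gives $B(x,R)\subset B(y,2R)$, so
\[
\mathrm{Vol}(B(x,R))\le \mathrm{Vol}(B(y,2R)).
\]
Applying the polynomial bound with $s=2R$ yields
\[
\mathrm{Vol}(B(x,R))\le D\left(\frac{2R}{r}\right)^{\kappa}\mathrm{Vol}(B(y,r))=D\cdot 2^{\kappa}\left(\frac{R}{r}\right)^{\kappa}\mathrm{Vol}(B(y,r)).
\]
Since $2^{\kappa}=2^{\log_2 D}=D$, the factor collapses to $D^2$, producing the displayed inequality $\mathrm{Vol}(B(x,R))\le D^2(R/r)^{\kappa}\mathrm{Vol}(B(y,r))$.

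\smallskip

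For the ``in particular'' clause, I would just specialize: take $x=q$, $R=r$, and let $y\in\partial B(q,r)$, using the smaller radius $r/16$ in place of $r$. The inequality then reads
\[
\mathrm{Vol}(B(q,r))\le D^2\cdot 16^{\kappa}\,\mathrm{Vol}\!\left(B\!\left(y,\tfrac{r}{16}\right)\right),
\]
valid for all $r>0$, so $\eta:=D^2\cdot 16^{\kappa}$ realizes the volume comparison condition with $r_0$ any positive number. The only mildly delicate point is to check the integer $k$ chosen in the iteration step satisfies $k\le 1+\log_2(s/r)$ when $s\ge r$, which is where the stray factor of $D$ (accounting for the ``$+1$'') enters and then gets absorbed by the $2^{\kappa}=D$ identity; everything else is routine.
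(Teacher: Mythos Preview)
Your proof is correct and follows essentially the same approach as the paper: iterate the doubling property at the center $y$, bound the resulting power of $D$ by $(R/r)^{\kappa}$ via a choice of integer exponent, and use the inclusion $B(x,R)\subset B(y,2R)$ to change centers. The paper carries these out in one stroke (choosing $m$ with $2^m<R/r\le 2^{m+1}$ and using $B(y,2R)\subset B(y,2^{m+2}r)$ directly), whereas you first isolate the polynomial growth bound at a single center and then apply it with $s=2R$; the content is the same.
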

\begin{proof}[Proof of Proposition \ref{voldoub}]
Assume $(M,g)$ satisfies the volume doubling property, that is
\[
\mathrm{Vol}(B(x,2r))\le D\,\mathrm{Vol}(B(x,r))
\]
for any $x\in M$ and $r>0$, where $D$ is a fixed constant.
Let $m$ be a positive integer such that $2^m<R/r\le 2^{m+1}$.
Since
\[
B(x,R)\subset B(y,2R)\subset B(y,2^{m+2}r)
\]
and thus
\[
\mathrm{Vol}(B(x,R))\le\mathrm{Vol}(B(y,2^{m+2}r)),
\]
then we have
\begin{equation*}
\begin{aligned}
\mathrm{Vol}(B(x,R))&\le D^{m+2}\mathrm{Vol}(B(y,r))\\
&\le D^2\left(\frac{R}{r}\right)^{\kappa}\mathrm{Vol}(B(y,r)),
\end{aligned}
\end{equation*}
where $\kappa=\log_2D$. This proves the first estimate.

In particular, when $y\in \partial B(x,R)$, we let $r=R/16$ in the first
estimate and immediately get volume comparison condition.
\end{proof}

Second, we observe that the shrinker with at least quadratic decay of scalar
curvature implies some non-collapsed property and hence satisfies volume comparison
condition.
\begin{proposition}\label{decc}
Let $(M,g,f)$ be a complete non-compact shrinker with a infimum point $p\in M$ of $f$
satisfying \eqref{Eq1}, \eqref{condition} and \eqref{condmu}. If the scalar
curvature satisfies
\[
\mathrm{S}(x)\cdot r^2(p,x)\le c_0
\]
for any $r(p,x)>0$, where $c_0>0$ is a constant and $r(p,x)$ is the distance function
from $p$ to $x$, then the shrinker satisfies volume comparison condition.
In particular, any shrinker with finite asymptotic scalar curvature ratio
satisfies volume comparison condition.
\end{proposition}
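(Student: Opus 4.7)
The plan is to choose the base point $q=p$ to be an infimum point of $f$ and then combine the upper volume bound from Lemma \ref{logeq1} with the Sobolev-based lower volume bound from Lemma \ref{logeq2}, exploiting the quadratic scalar curvature decay on the small ball $B(x,r/16)$. The key observation is that for any $x \in \partial B(p,r)$ and any $y \in B(x,r/16)$, the triangle inequality gives $r(p,y) \ge r - r/16 = 15r/16$, so the hypothesis $\mathrm{S}(y) \cdot r^2(p,y) \le c_0$ forces
\[
\mathrm{S}(y) \le \frac{c_0}{(15r/16)^2} = \frac{(16/15)^2 c_0}{r^2}
\]
throughout $B(x,r/16)$. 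Thus we may take $\Lambda = (16/15)^2 c_0/r^2$ in Lemma \ref{logeq2} applied on the ball $B(x,r/16)$.

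Next I would apply the second estimate in Lemma \ref{logeq2} to this ball. The crucial point is that $\Lambda \cdot (r/16)^2 = c_0/15^2$ is a universal constant (independent of $r$), so the factor $(1+\Lambda(r/16)^2)^{n/2}$ becomes a constant depending only on $n$ and $c_0$. This yields a clean lower bound
\[
\mathrm{Vol}\left(B\left(x,\tfrac{r}{16}\right)\right) \ge c(n,c_0)\, e^{\mu}\, r^{n}
\]
valid for all sufficiently large $r$. For the upper bound, Lemma \ref{logeq1} at the infimum point gives $\mathrm{Vol}(B(p,r)) \le c(n) e^{f(p)} r^n \le c(n) r^n$ since $f(p) \le n/2$. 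Dividing the two estimates produces
\[
\frac{\mathrm{Vol}(B(p,r))}{\mathrm{Vol}(B(x,r/16))} \le \eta
\]
for $\eta = \eta(n,c_0,\mu)$ and all $r \ge r_0(n,c_0)$, which is exactly the volume comparison condition at the base point $p$.

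The finite $\operatorname{ASCR}$ case is an immediate consequence: if $\operatorname{ASCR}(g)<\infty$, then by definition of $\limsup$, there exist constants $c_0,R_0>0$ such that $\mathrm{S}(x) r^2(p,x) \le c_0$ whenever $r(p,x) \ge R_0$, which is all one needs since the volume comparison condition is only required for $r \ge r_0$ (we simply take $r_0 \ge R_0 \cdot 16/15$ to ensure $r(p,y) \ge R_0$ for $y \in B(x,r/16)$ with $x \in \partial B(p,r)$). I do not anticipate any serious obstacle — the argument is essentially a bookkeeping exercise once one notices the fortunate cancellation $\Lambda\cdot(r/16)^2 = \text{const}$. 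The only mildly delicate point is ensuring that the triangle inequality gives the right scalar curvature bound on the entire small ball $B(x,r/16)$, which requires $r$ to be large enough that $15r/16 > 0$ is controlled away from zero; this is automatic for $r \ge r_0$ with $r_0$ chosen suitably.
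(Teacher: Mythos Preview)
Your proposal is correct and follows essentially the same route as the paper's proof: bound $\mathrm{S}$ on the small ball $B(x,r/16)$ via the triangle inequality and the quadratic decay, feed this $\Lambda$ into the second estimate of Lemma \ref{logeq2} so that $\Lambda\cdot(r/16)^2$ is a constant depending only on $c_0$, and then divide by the upper bound $\mathrm{Vol}(B(p,r))\le c(n)r^n$ from Lemma \ref{logeq1}. The paper carries out the same computation with a parameter $\alpha\in[1/32,1/2]$ in place of your fixed $1/16$, but this is cosmetic.
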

\begin{proof}[Proof of Proposition \ref{decc}]
For any $1/32\le\alpha\le 1/2$, for any
$r>0$ and for any point $q\in \partial B(p,r)$, by the second estimate of
Lemma \ref{logeq2}, we have
\[
(\alpha r)^{-n}\mathrm{Vol}(B(q,\alpha r))\ge c(n)e^{\mu}
\left[1+\frac{c_0}{(1-\alpha)^2r^2}\cdot(\alpha r)^2\right]^{-\frac n2}
\]
where we used
\[
\mathrm{S}\le\frac{c_0}{r^2(p,x)}\le\frac{c_0}{(1-\alpha)^2r^2}.
\]
Namely, for any $r>0$ and for any point $q\in \partial B(p,r)$,
\begin{equation*}
\begin{aligned}
\mathrm{Vol}(B(q,\alpha r))&\ge c(n)e^{\mu}\left[1+\frac{c_0\alpha^2}{(1-\alpha)^2}\right]^{-\frac n2}\alpha^n\cdot r^n\\
&\ge c(n,c_0)e^{\mu}r^n
\end{aligned}
\end{equation*}
for some constant $c(n,c_0)$ depending only on $n$ and $c_0$, where used $1/32\le\alpha\le 1/2$.

On the other hand, by Lemma \ref{logeq1},
\[
\mathrm{Vol}(B(p,r))\le c(n)r^n
\]
for any $r>0$. Thus, for any $r>0$ and for any point $q\in \partial B(p,r)$, the
lower and upper volume estimates give
\[
\frac{\mathrm{Vol}(B(p,r))}{\mathrm{Vol}(B(q,\alpha r))}\le c(n,c_0)e^{-\mu}.
\]
Letting $\alpha=1/16$ shows that such shrinker satisfies
volume comparison condition.
\end{proof}

The proof of Proposition \ref{decc} indicates that the finite asymptotic scalar
curvature ratio implies the positive asymptotic volume ratio. Moreover,
combining Proposition \ref{decc} and Theorem \ref{main1}, we easily get the
following result due to Munteanu, Schulze and Wang \cite{[MSW]}.
\begin{corollary}\label{cor}
Any complete non-compact shrinker with finite asymptotic scalar
curvature ratio must have finitely many ends.
\end{corollary}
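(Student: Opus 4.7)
The plan is to combine Proposition \ref{decc} with Theorem \ref{main1}: finiteness of the asymptotic scalar curvature ratio feeds into Proposition \ref{decc} to give the volume comparison condition, and then Theorem \ref{main1} yields the finiteness of ends. The only non-trivial step is reducing the definition of $\operatorname{ASCR}(g)$ as a $\limsup$ at infinity to the pointwise-everywhere bound $\mathrm{S}(x) \cdot r(p,x)^2 \le c_0$ required by Proposition \ref{decc}.

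First I would fix $p \in M$ to be an infimum point of $f$, which exists by Lemma \ref{potenesti}, and recall the observation made when $\operatorname{ASCR}$ was introduced that its value is independent of the base point used to define it. Setting $A := \operatorname{ASCR}(g) < \infty$ and using $p$ as the base point in the definition, there exists $R_0 > 0$ such that
\[
\mathrm{S}(x) \cdot r(p,x)^2 \le A + 1 \quad \text{for all } x \text{ with } r(p,x) \ge R_0.
\]
On the compact set $\overline{B(p, R_0)}$ the scalar curvature is bounded by some constant $M$ by smoothness, so there $\mathrm{S}(x) \cdot r(p,x)^2 \le M R_0^2$. Setting $c_0 := \max\{A + 1, M R_0^2\}$ gives the global pointwise bound
\[
\mathrm{S}(x) \cdot r(p,x)^2 \le c_0 \quad \text{for every } x \in M,
\]
which is precisely the hypothesis of Proposition \ref{decc}.

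Applying Proposition \ref{decc} then produces the volume comparison condition (with the specific constant $\eta$ extracted in the proof of that proposition, of the form $c(n,c_0)e^{-\mu}$), and Theorem \ref{main1} finally delivers the conclusion that $(M,g,f)$ has finitely many ends. Every ingredient has already been established in earlier sections, so there is no substantive obstacle; the only bookkeeping point is the upgrade from an asymptotic $\limsup$ to a globally valid pointwise bound, handled by the smoothness of $\mathrm{S}$ on the compact core $\overline{B(p,R_0)}$.
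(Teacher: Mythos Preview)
Your proposal is correct and follows exactly the paper's route: the paper simply states that Corollary \ref{cor} follows by combining Proposition \ref{decc} (whose ``in particular'' clause already asserts that finite $\operatorname{ASCR}$ gives the volume comparison condition) with Theorem \ref{main1}, and your write-up merely makes explicit the easy step---via compactness of $\overline{B(p,R_0)}$---of upgrading the asymptotic $\limsup$ bound to a global pointwise bound $\mathrm{S}(x)\,r(p,x)^2\le c_0$. One cosmetic point: you use $M$ for the scalar curvature bound on the compact core, which clashes with $M$ already denoting the manifold; pick another letter.
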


Third, we see that if a family of the average of scalar curvature integral
has at least quadratic decay of radius, then such shrinker also satisfies
volume comparison condition.
\begin{proposition}\label{intevc}
Let $(M,g,f)$ be a complete non-compact shrinker with a infimum point $p\in M$ of $f$
satisfying \eqref{Eq1}, \eqref{condition} and \eqref{condmu}. If
there exists a constant $c_1>0$ such that
\begin{equation}\label{intsca}
\frac{r^2}{\mathrm{Vol}\left(B(x,r)\right)}\int_{B(x,r)}\mathrm{S}\,dv\le c_1
\end{equation}
for all $r>0$ and all $x\in\partial B(p,r)$, then the shrinker satisfies volume comparison condition.
\end{proposition}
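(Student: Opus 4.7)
The plan is to verify the volume comparison condition by combining the Euclidean upper bound on $\mathrm{Vol}(B(p,r))$ from Lemma \ref{logeq1} with a matching Euclidean lower bound on $\mathrm{Vol}(B(x,r/16))$ for $x\in\partial B(p,r)$. The third estimate of Lemma \ref{logeq1} with $\delta=0$ gives $\mathrm{Vol}(B(p,r))\le c(n)r^n$ for all $r\ge c(n)$, so the volume comparison condition with $\eta=c(n)C(n,c_1)^{-1}e^{-\mu}$ will follow once I establish
\[
\mathrm{Vol}(B(x,r/16))\ge C(n,c_1)e^{\mu}r^n
\]
uniformly in $x\in\partial B(p,r)$ and in $r$ sufficiently large.

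For this Euclidean lower bound I would apply Lemma \ref{logeq2} at the center $x$ with radius $r/16$. Let $r_0\in[0,r/16]$ denote the minimizer of $s\mapsto\mathrm{Vol}(B(x,s))/s^n$. If $r_0=0$, the bound $\mathrm{Vol}(B(x,r/16))\ge\omega_n(r/16)^n$ is immediate. In the case $r_0>0$, tracing through the Sobolev argument in the proof of Lemma \ref{logeq2} yields the core estimate
\[
\frac{\mathrm{Vol}(B(x,r_0))}{r_0^n}\bigl(1+A_0\bigr)^{n/2}\ge c(n)e^{\mu},\qquad A_0:=\frac{r_0^2\int_{B(x,r_0)}\mathrm{S}\,dv}{\mathrm{Vol}(B(x,r_0))},
\]
and the argmin property forces $\mathrm{Vol}(B(x,r/16))/(r/16)^n\ge\mathrm{Vol}(B(x,r_0))/r_0^n$, so a uniform upper bound $A_0\le C(n,c_1)$ would close the argument. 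The hypothesis now enters: since $x\in\partial B(p,r)$, $\mathrm{S}\ge 0$, and $r_0\le r/16\le r$,
\[
r_0^2\int_{B(x,r_0)}\mathrm{S}\,dv\le\Bigl(\frac{r}{16}\Bigr)^{2}\cdot\frac{c_1}{r^2}\mathrm{Vol}(B(x,r))\le c(n,c_1)r^n,
\]
where the final step uses $\mathrm{Vol}(B(x,r))\le\mathrm{Vol}(B(p,2r))\le c(n)r^n$ from Lemma \ref{logeq1}. Equivalently, $A_0\cdot\mathrm{Vol}(B(x,r_0))\le c(n,c_1)r^n$.

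The hard part is upgrading this product inequality to a genuine upper bound on $A_0$. I would handle it by a dichotomy on $\mathrm{Vol}(B(x,r_0))$: in the non-collapsed subcase $\mathrm{Vol}(B(x,r_0))\ge c(n,c_1)r^n$, one reads off $A_0\le 1$ directly, which via the core inequality gives $\mathrm{Vol}(B(x,r_0))/r_0^n\ge C(n)e^{\mu}$ and hence $\mathrm{Vol}(B(x,r/16))\ge C(n)e^{\mu}(r/16)^n$ by argmin-monotonicity. In the collapsed subcase $\mathrm{Vol}(B(x,r_0))<c(n,c_1)r^n$, the core inequality forces $A_0$ to be large but simultaneously yields the density bound $\mathrm{Vol}(B(x,r_0))/r_0^n\ge c(n)e^{\mu}(1+A_0)^{-n/2}$; substituting this density bound back into $A_0\cdot\mathrm{Vol}(B(x,r_0))\le c(n,c_1)r^n$ produces a self-improving estimate that pins down $A_0\le C(n,c_1)$, and the argmin-monotonicity propagates the Euclidean density bound from scale $r_0$ out to scale $r/16$ even when $r_0\ll r/16$. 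The main obstacle is the clean execution of this collapsed subcase, where the loss of direct control at scale $r_0$ must be absorbed into the self-improving estimate; once carried out, the Euclidean lower bound on $\mathrm{Vol}(B(x,r/16))$ follows and the volume comparison condition is verified.
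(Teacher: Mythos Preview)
Your collapsed subcase does not close. Substituting the density bound $\mathrm{Vol}(B(x,r_0))\ge c(n)e^{\mu}r_0^n(1+A_0)^{-n/2}$ into the product inequality $A_0\cdot\mathrm{Vol}(B(x,r_0))\le c(n,c_1)r^n$ gives
\[
A_0\,(1+A_0)^{-n/2}\;\le\;c(n,c_1)\,e^{-\mu}\,(r/r_0)^{n}.
\]
For $n\ge 2$ the left-hand side is a \emph{bounded} function of $A_0$ (it tends to $0$ as $A_0\to\infty$ when $n\ge 3$, and to $1$ when $n=2$), so the inequality is vacuous whenever $(r/r_0)^{n}$ is large. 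Nothing in your setup forces $r_0$ to be comparable to $r$---in the collapsed regime it may be arbitrarily small---so the ``self-improving estimate'' never improves and the conclusion $A_0\le C(n,c_1)$ does not follow. Without that bound you cannot propagate the density estimate from scale $r_0$ out to scale $r/16$, and the argument stalls.

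The paper's route is shorter and avoids this issue entirely. It applies the first estimate of Lemma~\ref{logeq2} at center $x\in\partial B(p,r)$ with radius $r/16$, then invokes the hypothesis~\eqref{intsca} to bound the whole bracket
\[
\sup_{s\in[0,\,r/16]}\frac{s^{2}\int_{B(x,s)}\mathrm{S}\,dv}{\mathrm{Vol}(B(x,s))}\le c_1
\]
in one stroke, yielding $\mathrm{Vol}(B(x,r/16))\ge c(n,c_1)e^{\mu}r^{n}$ directly; combining with $\mathrm{Vol}(B(p,r))\le c(n)r^{n}$ from Lemma~\ref{logeq1} gives the volume comparison condition. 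The point is that the paper uses the hypothesis at \emph{every} scale $s\le r/16$ appearing in the supremum, not only at the single scale $s=r$ that you extracted; your attempt to work from that one scale and then bridge down to $r_0$ by bootstrap is exactly where the argument breaks.
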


\begin{proof}[Proof of Proposition \ref{intevc}]
For any $r>0$, we let point $q$ be $x\in\partial B(p,r)$ in the first estimate
of Lemma \ref{logeq2}, and get
\[
\frac{\mathrm{Vol}\left(B(x,\frac{r}{16})\right)}{(\tfrac{r}{16})^n}
\left[1+\sup_{s\in\left[0,\tfrac{r}{16}\right]}\frac{s^2\int_{B(x,s)}\mathrm{S}\,dv}{\mathrm{Vol}(B(x,s))}\right]^{n/2}
\ge c(n)e^{\mu}.
\]
By the assumption \eqref{intsca}, the above inequality becomes
\[
\mathrm{Vol}\left(B(x,\frac{r}{16})\right)\ge c(n,c_1)e^{\mu}r^n
\]
for all $r>0$ and all $x\in\partial B(p,r)$. Combining this with the
volume upper growth $\mathrm{Vol}(B(p,r))\le c(n)r^n$
immediately yields
\[
\frac{\mathrm{Vol}(B(p,r))}{\mathrm{Vol}\left(B(x,\frac{r}{16})\right)}\le c(n,c_1)e^{-\mu}
\]
for any $r>0$ and all $x\in \partial B(p,r)$.
\end{proof}
\begin{remark}
Similar to the above argument, Proposition \ref{intevc} can be also proved by
Lemma \ref{slogeq}. Moreover, when $n\ge 3$, the assumption \eqref{intsca}
in Proposition \ref{intevc} can be replaced by the bound of the following maximal
function of scalar curvature introduced by Topping \cite{[To]}:
\[
\sup_{s\in\left(0,\tfrac{r}{16}\right]}s^{-1}
\left[\mathrm{Vol}(B(x,s))\right]^{-\frac{n-3}{2}}\left(\int_{B(x,s)} \mathrm{S}\,dv\right)^{\frac{n-1}{2}}\le\delta,
\]
for all $r>0$ and all $x\in\partial B(p,r)$,
where $\delta:=\min\{w_n,\,(4\pi)^{\frac n2}e^{\mu+n-2^n\cdot17}\}$
and $\omega_n$ is the volume of the unit Euclidean $n$-ball. This bound
assumption also enables us to get that
\[
\mathrm{Vol}\left(B(x,\frac{r}{16})\right)> \delta\,r^n
\]
for all $r>0$ and all $x\in\partial B(p,r)$, the interested readers
are referred to Theorem 3.1 of \cite{[Wu21]} for detailed proof.
\end{remark}

Combining Proposition \ref{intevc} and Theorem \ref{main1} leads to
\begin{corollary}\label{intcor}
Any complete non-compact shrinker satisfying \eqref{intsca}
must have finitely many ends.
\end{corollary}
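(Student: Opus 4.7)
The plan is to deduce this corollary as an immediate consequence of the two main ingredients already established in this section, namely Proposition \ref{intevc} and Theorem \ref{main1}. The logic is a simple two-step chain, so no new machinery is needed.

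First, I would invoke Proposition \ref{intevc} directly. Under hypothesis \eqref{intsca}, that proposition guarantees that the shrinker $(M,g,f)$ satisfies the volume comparison condition, with some constant $\eta=\eta(n,c_1)$ depending on the dimension and on the constant $c_1$ appearing in \eqref{intsca}. There is nothing to check by hand here: the hypothesis of Proposition \ref{intevc} is precisely \eqref{intsca}, so it applies verbatim with the infimum point $p\in M$ of $f$ playing the role of the base point.

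Second, with the volume comparison condition now verified at the base point $p$, Theorem \ref{main1} applies and immediately yields that $M$ has finitely many ends. Since the notion of having finitely many ends does not depend on the choice of base point, this finishes the argument.

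The proof therefore has no real obstacle beyond correctly quoting the two results in the right order; all of the geometric analysis, in particular the use of the Li--Wang Sobolev estimate from Lemma \ref{logeq2} together with the upper volume bound $\mathrm{Vol}(B(p,r))\le c(n)r^n$ from Lemma \ref{logeq1}, has been absorbed into Proposition \ref{intevc}, and the covering/counting argument has been absorbed into Theorem \ref{shrendest} and hence Theorem \ref{main1}. Thus the corollary can be stated in one or two lines: \emph{By Proposition \ref{intevc}, $(M,g,f)$ satisfies the volume comparison condition; by Theorem \ref{main1}, $M$ has finitely many ends.}
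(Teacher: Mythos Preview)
Your proposal is correct and matches the paper's own justification exactly: the paper simply states that the corollary follows by combining Proposition \ref{intevc} and Theorem \ref{main1}, which is precisely the two-step chain you describe.
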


In the proof of Corollaries \ref{cor} and \ref{intcor}, we observe that
these curvature assumptions both imply a family of Euclidean volume growth.
These proof indeed shows that any shrinker with a family of Euclidean volume
growth must have volume comparison condition.
\begin{corollary}\label{AVRc}
If a complete non-compact shrinker $(M,g,f)$ with a infimum point $p\in M$ of $f$
satisfies
\begin{equation}\label{famieq}
\mathrm{Vol}\left(B(x,\frac{r}{16})\right)\ge c\,r^n
\end{equation}
for all $r\ge r_0$ for some $r_0>0$, and all $x\in\partial B(p,r)$, where $c$ is a positive constant
independent of $x$ and $r$, then such shrinker satisfies volume comparison
condition and hence has finitely many ends.
\end{corollary}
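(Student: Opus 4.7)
The plan is to verify the volume comparison condition directly with base point $q=p$, using only the two volume bounds already established in Section \ref{volcom}, and then invoke Theorem \ref{main1}. The corollary essentially formalizes the observation (already noted in the remark preceding its statement) that Euclidean-type lower volume bounds on balls centered on spheres $\partial B(p,r)$, combined with the matching Euclidean upper bound at the base point, directly yield volume comparison.

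The key step is to pair the hypothesis \eqref{famieq} with Lemma \ref{logeq1}. Since $p$ is an infimum point of $f$, Lemma \ref{potenesti} gives $f(p)\le n/2$, so the first (or third) estimate of Lemma \ref{logeq1} yields
\[
\mathrm{Vol}(B(p,r))\le c(n)\,r^{n}
\]
for all $r\ge c(n)$ (or indeed for all $r\ge 0$, by the first estimate). By assumption,
\[
\mathrm{Vol}\!\left(B(x,\tfrac{r}{16})\right)\ge c\,r^{n}
\]
for every $r\ge r_0$ and every $x\in\partial B(p,r)$. Dividing the former by the latter, for all $r\ge\max\{r_0,c(n)\}$ and all $x\in\partial B(p,r)$,
\[
\frac{\mathrm{Vol}(B(p,r))}{\mathrm{Vol}\!\left(B(x,\tfrac{r}{16})\right)}\le \frac{c(n)}{c}=:\eta.
\]
This is precisely the volume comparison condition of the definition, with base point $q=p$ and with the constants $\eta$ and $r_0$ (enlarged if necessary) just produced.

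Having verified that $(M,g,f)$ satisfies volume comparison condition, Theorem \ref{main1} immediately yields that the shrinker has finitely many ends, completing the proof. There is really no significant obstacle here: the only mild subtlety is making sure the upper volume bound is applied at the infimum point $p$ so that the factor $e^{f(q)}$ in the general form of Lemma \ref{logeq1} is absorbed into $c(n)$, which is precisely why the hypothesis \eqref{famieq} is stated on spheres centered at $p$ rather than at an arbitrary base point.
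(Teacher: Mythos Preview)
Your proof is correct and follows exactly the approach the paper indicates: the paper does not spell out a separate proof for this corollary but simply notes that the arguments of Propositions \ref{decc} and \ref{intevc} both culminate in the lower bound \eqref{famieq}, which when combined with the upper bound $\mathrm{Vol}(B(p,r))\le c(n)\,r^n$ from Lemma \ref{logeq1} (using $f(p)\le n/2$) yields the volume comparison condition at $q=p$; Theorem \ref{main1} then finishes the argument. Your write-up makes this explicit and is fully aligned with the paper.
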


In the end of this section, we give some comments on the relation between
Corollary \ref{AVRc} and asymptotic volume ratio on shrinkers. Recall that
the \textit{asymptotic volume ratio} ($\operatorname{AVR}$) of a complete
Riemannian manifold $(M,g)$ is defined by
\[
\operatorname{AVR}(g):=\lim_{r\rightarrow\infty}\frac{\operatorname{Vol}B(q,r)}{\omega_nr^n}
\]
if the limit exists. Whenever the $\operatorname{AVR}(g)$ exists, it is independent
of point $q$. If $(M,g)$ has nonnegative Ricci curvature, then the limit always
exists by the Bishop-Gromov volume comparison. For any shrinker, Chow, Lu and Yang
\cite{[CLY]} proved that $\operatorname{AVR}(g)$ always exists and is finite. The
assumption \eqref{famieq} naturally implies positive asymptotic volume ratio;
but the reverse problem is not clear to the author at present. Notice that Feldman,
Ilmanen and Knopf \cite{[FIK]} described examples of complete non-compact K\"ahler
shrinkers, which have $\operatorname{AVR}(g)>0$ and the Ricci curvature changes sign.
We see that positive asymptotic volume ratio provides the Euclidean volume growth
based on a fixed point, which does not seem to yield a family of Euclidean volume growth
\eqref{famieq}. On the other hand, Carrillo and Ni \cite{[CaNi]} proved that any
shrinker with Ricci curvature $\mathrm{Ric}(g)\ge0$ must have $\operatorname{AVR}(g)=0$.
Here we may reverse the process and naively ask that if $\operatorname{AVR}(g)=0$ implies $\mathrm{Ric}(g)\ge0$?

\section{Diameter growth of ends}\label{sec5}

In the last section, we will apply the ball covering property to study the
diameter growth of ends in the shrinker. The manifold case can be referred to
\cite{[AG]}, where Abresch and Gromoll proved that every end of manifolds with
nonnegative Ricci curvature has most linear diameter growth. Later this result
can be generalized by Liu \cite{[Liu2]} to manifolds with nonnegative Ricci
curvature outside a compact set. Let us first recall the definition diameter
of ends on manifolds; see also \cite{[Liu2]}.

\begin{definition}
Let $q$ be a fixed point in a Riemannian manifold $(M,g)$. For any $r>0$,
any connected component $\Sigma$ of the annulus
\[
A_q(2r,\tfrac{3}{4}r):=B(q,2r)\backslash\overline{B(q,\tfrac{3}{4}r)},
\]
and any two points $x,y\in \Sigma\cap\partial B(q,r)$, we let
\[
d_r(x,y):=\inf\left\{\mathrm{length}(\gamma)\right\},
\]
where the infimum is taken over all piecewise smooth curves $\gamma$
from $x$ to $y$ in $M\backslash\overline{B(q,r/2)}$. Then we set
\[
\mathrm{diam}\left(\Sigma\cap\partial B(q,r)\right)
:=\sup_{x,y\in\Sigma\cap\partial B(q,r)}d_r(x,y).
\]
Using the above notations, the \textit{diameter of ends} at $r$ from $q$ is defined by
\[
\mathrm{diam}_q(r):=\sup_{\Sigma\subset A_q(2r,\tfrac{3}{4}r)}
\mathrm{diam}\left(\Sigma\cap\partial B(q,r)\right).
\]
See Figure 2 for a simple description.
\end{definition}

\begin{figure}
    \centering
\includegraphics[scale=0.5]{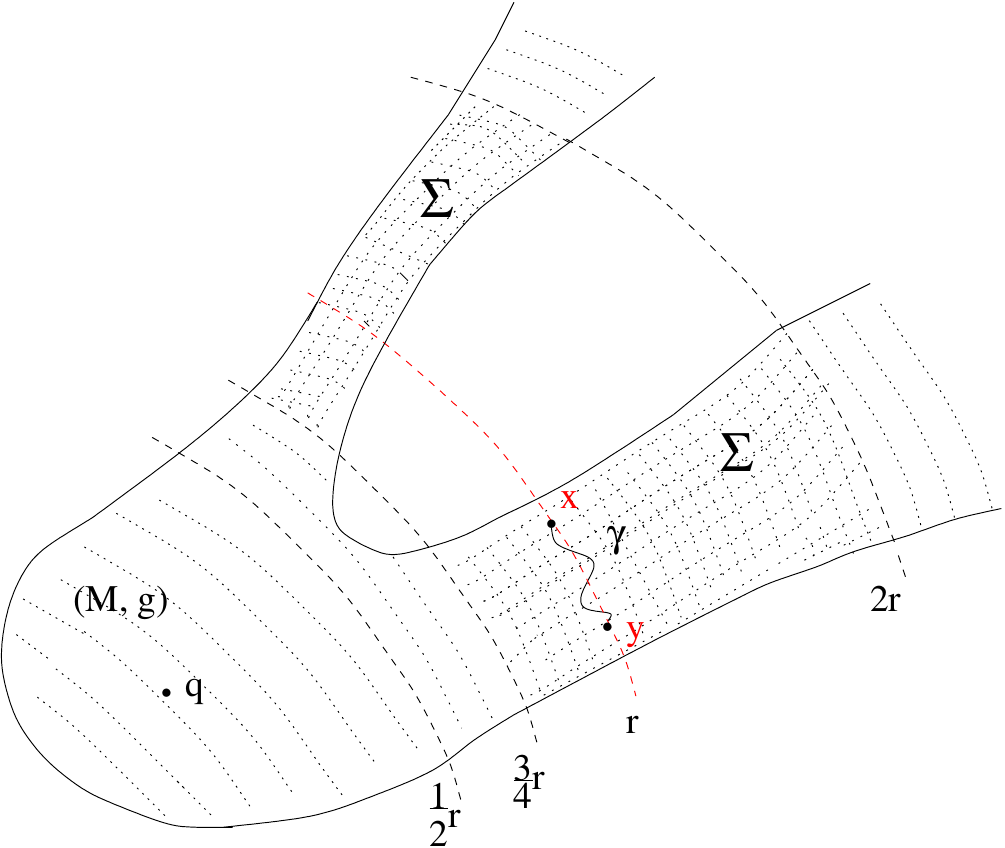}
\caption{ \footnotesize Definition of the diameter of ends}
\label{Fig2}
\end{figure}


We now apply the above definition to Theorem \ref{diam} and obtain a diameter
growth for ends in the shrinker without any assumption.
\begin{theorem}\label{diam}
On any $n$-dimensional complete non-compact shrinker with the scalar curvature
\[
\mathrm{S}\ge \delta
\]
for some constant $\delta\ge 0$, the diameter growth of
ends is at most polynomial growth with degree $2(n-\delta)+1$.
\end{theorem}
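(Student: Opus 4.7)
The plan is to mirror Liu's approach in \cite{[Liu2]}, substituting the weak ball covering property from Theorem \ref{sdest} in place of the covering lemma for manifolds with nonnegative Ricci curvature outside a compact set. Since the estimates of Section \ref{volcom} are uniform when the base point is an infimum point $p$ of $f$, I take $q=p$ and aim to bound $\mathrm{diam}_p(r)$. The first step is to cover the enlarged annulus $A_p(2r,\tfrac{3}{4}r)=B(p,2r)\setminus\overline{B(p,\tfrac{3}{4}r)}$ by small balls whose centers lie inside the annulus. Specifically, for $r$ sufficiently large I take the maximum number of disjoint balls $B(p_i,r/20)$ with $p_i\in A_p(2r,\tfrac{3}{4}r)$; the adaptation of Proposition \ref{vd} (the scalar curvature upper bound $\mathrm{S}\le c(n)r^2$ on $B(p_i,r/20)$ still gives $\mathrm{Vol}(B(p_i,r/20))\ge c(n)e^{\mu}r^{-n}$ via Lemma \ref{logeq2}, and Lemma \ref{logeq1} yields $\mathrm{Vol}(B(p,2r))\le c(n)r^{n-2\delta}$) combined with the relative volume comparison $\mathrm{Vol}(B(p,3r))\le c(n)\,\mathrm{Vol}(B(p,\beta_i r))$ from Theorem \ref{relcompar} (where $\beta_ir=d(p_i,p)$) and the observation that the disjoint balls all sit inside $B(p,3r)$, yields a covering of $A_p(2r,\tfrac{3}{4}r)$ by the doubled balls $B(p_i,r/10)$ with
\[
k\le N=c(n)e^{-\mu}r^{2(n-\delta)}.
\]

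The second step is a chain argument. Fix a connected component $\Sigma$ of $A_p(2r,\tfrac{3}{4}r)$ and two points $x,y\in\Sigma\cap\partial B(p,r)$. Since $\Sigma$ is an open connected subset of $M$, it is path-connected; take a continuous path from $x$ to $y$ inside $\Sigma$ and trace it against the covering to extract an overlapping chain $B(p_{i_1},r/10),\ldots,B(p_{i_m},r/10)$ with $x\in B(p_{i_1},r/10)$, $y\in B(p_{i_m},r/10)$, consecutive balls intersecting, and $m\le k\le N$. For each consecutive pair a minimizing geodesic from $p_{i_j}$ to $p_{i_{j+1}}$ has length $<2\cdot r/10=r/5$ and every point on it stays within $r/5$ of $p_{i_j}$; since $d(p_{i_j},p)>\tfrac{3}{4}r$, such a point sits at distance $>\tfrac{3}{4}r-\tfrac{r}{5}=\tfrac{11r}{20}>\tfrac{r}{2}$ from $p$. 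The short initial and terminal segments from $x$ to $p_{i_1}$ and from $p_{i_m}$ to $y$ each have length $<r/10$ and stay at distance $\ge r-r/10=\tfrac{9r}{10}>r/2$ from $p$. Concatenating the $(m+1)$ pieces produces a piecewise-smooth curve from $x$ to $y$ in $M\setminus\overline{B(p,r/2)}$ of total length at most $(m+1)\cdot r/5\le c(n)e^{-\mu}r^{2(n-\delta)+1}$, whence $d_r(x,y)$ and finally $\mathrm{diam}_p(r)$ admit the claimed bound after taking suprema over $\Sigma$ and $x,y$.

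The main technical point is to choose the covering radius $\rho$ small enough that every piece of the chain stays inside the allowed region $M\setminus\overline{B(p,r/2)}$; this is precisely why I use $\rho=r/10$ rather than the $\rho=r/4$ appearing in Theorem \ref{sdest}. The buffer inequality $d(p_{i_j},p)>\tfrac{3}{4}r$ together with $2\rho<r/4$ (equivalently $\rho<r/8$) forces every minimizing geodesic segment to lie strictly outside $\overline{B(p,r/2)}$, and this uses in an essential way that the annulus in the definition of $\mathrm{diam}_p(r)$ is opened from $B(p,r)$ to $B(p,\tfrac{3}{4}r)$. All other steps are routine adaptations of arguments already developed in Sections \ref{volcom} and \ref{sec3}. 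The exponent $2(n-\delta)+1$ in the conclusion is (covering degree) $+\,1$: one factor of $r$ from the diameter of a single covering ball multiplied by $r^{2(n-\delta)}$ copies in the chain.
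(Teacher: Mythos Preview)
Your proof is correct and follows essentially the same strategy as the paper: adapt the weak ball covering of Theorem~\ref{sdest} to the enlarged annulus $A_p(2r,\tfrac{3}{4}r)$ and run a chain argument through the covering to build a short curve in $M\setminus\overline{B(p,r/2)}$. The only difference is cosmetic: the paper keeps the covering radius $r/4$ but routes each step of the chain through an intermediate point $z_j\in B(p_{i_j},r/4)\cap B(p_{i_{j+1}},r/4)$ (so every geodesic segment has length $<r/4$ and hence stays outside $\overline{B(p,r/2)}$), whereas you shrink the radius to $r/10$ and connect consecutive centers directly---both devices achieve the same containment and the same length bound.
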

\begin{proof}[Proof of Theorem \ref{diam}]
Without loss of generality, we choose a infimum point $p\in M$ of $f$ as a base point.
By Theorem \ref{sdest}, for a fixed sufficiently large $r$, and for any connected component
$\Sigma$ of the annulus $A_q(2r,\tfrac{3}{4}r)$, we can find no more than
\[
N:=c(n)e^{-\mu}r^{2(n-\delta)}
\]
geodesic balls $B_i:=B\left(p_i,\frac{r}{4}\right)$, where
$p_i\in A_q(2r,\tfrac{3}{4}r)$ and $i\le N$ such that
\[
\bigcup_{i=1}B\left(p_i,\frac{r}{4}\right)\supset \Sigma.
\]
For any two points $x$ and $y$ in $\Sigma\cap\partial B(q,r)$,
since $\Sigma$ is connected, we can find a subsequence of geodesic balls $\{B_i\}$:
$B_{i_1},\ldots,B_{i_k}$, where $k\le N$ such that
\[
x\in B_{i_1},\quad B_{i_j}\cap B_{i_{j+1}}\neq \emptyset\,\,(j=1 ,\ldots,k-1), \quad y\in B_{i_k}.
\]
Now we choose fixed points $z_j\in B_{i_j}\cap B_{i_{j+1}}$ and consecutively connect
the above mentioned points
\[
x,p_{i_1},z_1,p_{i_2},z_2,p_{i_3},\ldots,p_{i_{k-1}},z_{k-1},p_{i_k},y,
\]
which forms a piecewise smooth curve $\gamma$. Obviously, the curve $\gamma$ lies in
$M\backslash\overline{B(q,r/2)}$ and has the length of $\gamma$
\[
\mathrm{length}(\gamma)\le 2k\cdot\frac{r}{4}
\le\frac{N}{2}r\le c(n)e^{-\mu}r^{2(n-\delta)+1}.
\]
This completes the proof.
\end{proof}

\begin{remark}
If the scalar curvature of shrinker is uniformly bounded, by Remark \ref{reN1},
the above argument indicates that the degree $2(n-\delta)+1$ in Theorem
\ref{diam} can be reduced to $n-2\delta+1$.
\end{remark}

If the shrinker satisfies volume comparison condition, by the same
argument as above, Theorem \ref{shrendest} immediately implies
\begin{theorem}
On any complete non-compact shrinker with volume comparison condition,
the diameter growth of ends is at most linear.
\end{theorem}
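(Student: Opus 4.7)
The plan is to mimic the proof of Theorem \ref{diam} line by line, but to feed in the stronger covering statement Theorem \ref{shrendest} (which costs only a dimension-and-$\eta$-dependent constant number of balls) in place of the polynomial bound from Theorem \ref{sdest}. Because the number of balls no longer depends on $r$, the chaining argument will produce a curve of length linear in $r$ rather than polynomial in $r$.

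Concretely, I would first pick a base point $q\in M$, e.g.\ the infimum point $p$ of $f$, and fix a sufficiently large radius $r\ge 2(\sqrt{n}+c)+r_0$ so that Theorem \ref{shrendest} applies. Given any connected component $\Sigma$ of the annulus $A_q(2r,\tfrac{3}{4}r)$, Theorem \ref{shrendest} (possibly applied on the slightly enlarged annulus, as in the proof of Theorem \ref{diam}) produces points $p_1,\ldots,p_k\in B(q,2r)\setminus\overline{B(q,\tfrac{3}{4}r)}$ with
\[
\bigcup_{i=1}^k B\!\left(p_i,\tfrac{r}{4}\right)\supset \Sigma,\qquad k\le N=N(n,\eta).
\]
Crucially, $N$ is independent of $r$.

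Next I would run the same chaining procedure as in the proof of Theorem \ref{diam}. Given two points $x,y\in\Sigma\cap\partial B(q,r)$, the connectedness of $\Sigma$ allows me to extract a sub-chain $B_{i_1},\ldots,B_{i_m}$ (with $m\le k\le N$) such that $x\in B_{i_1}$, $y\in B_{i_m}$ and consecutive balls intersect. Choosing $z_j\in B_{i_j}\cap B_{i_{j+1}}$ and concatenating the straight-ish geodesic arcs
\[
x\to p_{i_1}\to z_1\to p_{i_2}\to \cdots \to z_{m-1}\to p_{i_m}\to y,
\]
I obtain a piecewise smooth curve $\gamma$ lying in $M\setminus\overline{B(q,r/2)}$, because each $p_{i_j}$ and each intermediate midpoint lies in $B(q,2r)\setminus\overline{B(q,\tfrac{3}{4}r)}$ and balls of radius $r/4$ around those points cannot enter $B(q,r/2)$. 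The length of $\gamma$ is bounded above by $2m\cdot(r/4)\le N r/2$, so $d_r(x,y)\le \tfrac{N}{2}r$. Taking the supremum over $x,y\in\Sigma\cap\partial B(q,r)$ and then over components $\Sigma$ of $A_q(2r,\tfrac{3}{4}r)$ gives $\mathrm{diam}_q(r)\le \tfrac{N(n,\eta)}{2}\, r$, which is the claimed linear bound.

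The only mildly delicate point is verifying that the curve $\gamma$ actually stays outside $\overline{B(q,r/2)}$, but this is immediate from the triangle inequality: each chain point is at distance at least $\tfrac{3}{4}r$ from $q$, while each linking arc has length at most $r/4+r/4 = r/2$ from one chain point to the next, so the whole piecewise path is confined to the annulus $A_q(2r,r/2)$. Beyond that, the argument is purely a mechanical application of Theorem \ref{shrendest} to the template established in the proof of Theorem \ref{diam}; no new analytic input is needed.
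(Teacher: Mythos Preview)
Your proposal is correct and follows exactly the approach the paper itself takes: the paper's own proof of this theorem is the single sentence ``by the same argument as above, Theorem \ref{shrendest} immediately implies'' the result, and you have simply written that argument out in full, swapping the $r$-independent covering bound $N=N(n,\eta)$ from Theorem \ref{shrendest} for the polynomial bound in Theorem \ref{sdest}. The minor adjustment of applying the covering on the slightly larger annulus $A_q(2r,\tfrac{3}{4}r)$ is glossed over in the paper's proof of Theorem \ref{diam} just as you note, so there is nothing to add.
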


\bibliographystyle{amsplain}

\end{document}